\documentclass[11pt,a4paper,reqno]{amsart}
\usepackage{amsmath,amssymb,amsthm}
\usepackage[utf8]{inputenc}
\usepackage{enumitem}
\usepackage{graphicx}
\usepackage{tikz}

\usepackage[alphabetic]{amsrefs}

\usepackage{todonotes}
\usepackage[capitalise,noabbrev]{cleveref}
\usepackage{mathtools}

\DefineSimpleKey{bib}{primaryclass}{}
\DefineSimpleKey{bib}{archiveprefix}{}

\BibSpec{arXiv}{%
  +{}{\PrintAuthors}{author}
  +{,}{ \textit}{title}
  +{}{ \parenthesize}{date}
  +{,}{ arXiv }{eprint}
  +{,}{ primary class }{primaryclass}
}

\setenumerate{label = \rm (\roman*)}

\DeclareMathOperator{\End}{End}
\DeclareMathOperator{\Hom}{Hom}

\newcommand*{\ZZ}{\mathbb{Z}}

\newcommand*{\Z}{\mathbb{Z}}
\newcommand*{\vacuum}{\mathbf{1}}
\renewcommand{\L}[1]{\mathsf{L}_{#1}}

\renewcommand{\arraystretch}{1.5}
\newcommand{\dd}[1]{\frac{\mathrm{d}}{\mathrm{d}#1}}
\newcommand{\D}[1]{\mathcal{D}^{\left(#1\right)}}

\numberwithin{equation}{section}
\newtheorem{lemma}[equation]{Lemma}
\newtheorem{proposition}[equation]{Proposition}

\newtheorem{theorem}[equation]{Theorem}
\theoremstyle{definition}
\newtheorem{definition}[equation]{Definition}

\theoremstyle{remark}
\newtheorem{remark}[equation]{Remark}

\newcommand*{\ViiA}{a}
\newcommand*{\ViiB}{b}
\newcommand*{\Vir}{\mathsf{Vir}}
\newcommand*{\iv}{e} 
\newcommand*{\ccv}{\omega} 

\newcommand{\diagnode}[2]{\fill #1 circle (.1) [above] node{\tiny #2}; }

\newcommand*{\ViA}{d}
\newcommand*{\ViB}{e}

\title{Decompositions of Griess algebras beyond the OZ-setting}
\author{Jari Desmet}
\author{Louis Olyslager}
\makeatletter
\@namedef{subjclassname@2020}{%
  \textup{2020} Mathematics Subject Classification}
\makeatother
\subjclass[2020]{20G41, 17B45, 17B69, 20G05, 20G15}

\begin{document}
\maketitle
\begin{abstract} To each VOA of strong CFT type, we associate a non-associative algebra which generalizes Griess algebras of One-Zero (OZ) VOAs. We compute the fusion law of Ising vectors in this algebra using techniques from Miyamoto, leveraging the module theory of the corresponding Virasoro VOA.  This new class of algebras encapsulates the Chayet--Garibaldi algebras, a family of algebras constructed from absolutely simple linear algebraic groups that includes the $3876$-dimensional algebra for $E_8$. 
\end{abstract}

\section{Introduction}
Chayet--Garibaldi algebras were introduced in \cite{CG21} to provide an explicit construction of the unique $3876$-dimensional algebra for algebraic \linebreak groups of type $E_8$, which was shown to exist in \cite{GG15}. In \cite{DPhD}, the first-named author observed a striking similarity between certain idempotents in these algebras and idempotents in the Conway--Griess--Norton algebra, the $196884$-dimensional algebra used to construct the Monster group \cites{G82,C85}.  Famously, the  Conway--Griess--Norton algebra is intimately connected to the theory of \emph{vertex operator algebras}, and Masahiko Miyamoto used it as a guiding example to define \emph{Griess algebras} in \cite{M96}.

Unexpectedly, the second-named author, joint with Tom De Medts, was able to construct the Chayet--Garibaldi algebras from vertex operator algebras in \cite{DMO}, revealing a concrete connection between Chayet--Garibaldi algebras and Griess algebras. 

In this article, we provide a natural framework for this construction by generalizing Griess algebras to VOAs of strong CFT type (\cref{Generalized Griess algebra}), and show that Chayet--Garibaldi algebras are examples coming from irreducible affine VOAs (\cref{prop:CGisgriess}). 

Using this new setting, we can easily describe the relation between idempotents in Chayet--Garibaldi algebras and \emph{Ising vectors}. In \cite{M96}, the author observed that Ising vectors $\iv \in V$, vectors generating a Virasoro VOA $\mathrm{Vir}(\iv)\subseteq V$ of central charge $\frac12$,  define automorphisms of an OZ VOA $V$, by analyzing $V$ as a module for $\mathrm{Vir}(\iv)$. Miyamoto's observation and Sakuma's subsequent work \cite{S07} sparked the study of \emph{Majorana algebras} (\cite{I09}) and \emph{axial algebras} (\cite{HRS15}). In the same vein as Miyamoto, we show that Ising vectors of a VOA of strong CFT type correspond to idempotents of its generalized Griess algebra. 

An idempotent $e$ defines a \emph{fusion law}, a binary map $\star: S\times S \to 2^S$ on its spectrum $S$, 
 specifying that the product of $\lambda$- and $\mu$-eigenvectors lies in the sum of the $\rho$-eigenspaces with $\rho \in \lambda\star\mu$. We compute the fusion law of these idempotents in the sense of axial algebras. Surprisingly, only one more eigenvalue shows up in the spectrum, compared to the spectrum of the Griess algebra. We believe that the resulting two fusion laws, given in \cref{table:fl}, are natural objects to study in the framework of axial algebras.
\begin{table}
	\[	
	\begin{array}{c||c|c|c|c|c}
	
 *          & 0               & 1            & \frac14       & \frac1{32}   &     \frac12                     \\\hline\hline
 0          & 0         &              & \frac14,\frac12      & \frac1{32},\frac12 & \frac1{32},\frac14,\frac12            \\\hline
 1          &                   & 1    & \frac14        & \frac1{32}   & \frac12     \\\hline
 \frac14   & \frac14,\frac12    & \frac14        & 0,1            & \frac1{32},\frac12 & 0,\frac1{32},\frac12             \\\hline
 \frac1{32} & \frac1{32},\frac12      & \frac1{32}    & \frac1{32},\frac12   & 0,\frac14,1  & 0,\frac14,\frac12            \\\hline
 \frac12          & \frac1{32},\frac14,\frac12 & \frac12  & 0,\frac1{32},\frac12 & 0,\frac14,\frac12  & 0,\frac14,\frac1{32},1                         
 
	\end{array} 
\hspace{6ex}
\begin{array}{c||c|c|c|c}
	
	*          & 0     & 1        & \frac14   & \frac12               \\\hline\hline
	0          & 0    &          & \frac14,\frac12 & \frac14,\frac12        \\\hline
	1          &        & 1   & \frac14   & \frac12          \\\hline
	\frac14    & \frac14,\frac12 & \frac14  & 0,1       & 0           \\\hline
	\frac12          & \frac14,\frac12 & \frac12   & 0         & 0,1              
	
\end{array} 
\]
\vspace{1em}	
	\caption{\small This table depicts the fusion law of idempotents in Chayet--Garibaldi algebras corresponding to Ising vectors. The left is the general case, while the right is for Ising vectors of $\sigma$-type. Removing the values $\frac{1}{2}$ from the table produces the Monster fusion law (left) and the Jordan fusion law (right).}\label{table:fl}
\end{table}

\subsection*{Outline of the paper}
In the preliminaries (Section 2) we recall the definitions and basic properties of vertex operator algebras and their modules. A particular emphasis is placed on the module theory of the Virasoro VOA. We  recall basic computational tools of VOAs (\cref{lem:formulas}) as well as a summary of the representation theory of Virasoro VOAs of central charge $\frac{1}{2}$ (\cref{prop:fuse}), which allows us to easily compute the fusion law in Section 4.
 
In the rather short Section 3, we define the generalized Griess algebra $G(V)$ of a vertex operator algebra $V$ of strong CFT type. Our definition is natural, based on a construction in \cite{B86}, and $G(V)$ is the usual Griess algebra when $V$ is OZ. We also show that the algebra $G(V)$ is a Chayet--Garibaldi algebra when $V = L_{\hat{\mathfrak g}}(1,0)$ is a simple affine vertex operator algebra with $\mathfrak g$ a simple Lie algebra (\cref{prop:CGisgriess}).

Section 4 is the bulk of the paper. We exploit the module theory of a Virasoro VOA of central charge $\frac{1}{2}$ to compute the spectrum (\cref{lemma:eigenspaces}) and the fusion law (\cref{lem:stdfusion}) of an Ising vector in the generalized Griess algebra. As a side result, we show that any Ising vector of $V$ corresponds to an idempotent of $G(V)$ (\cref{lemma:42}). We also compute the fusion law for an Ising vector of $\sigma$-type (\cref{prop:fuslawBn}), and show that both Ising vectors of $\sigma$-type and not of $\sigma$-type occur in Chayet--Garibaldi algebras.
 \subsection*{Assumptions} Throughout the paper, $\mathbb{F}$ is a field of characteristic different from $2$ and $7$. While $2$ being invertible is unavoidable, characteristic $7$ is excluded because the module theory of the Virasoro VOA of central charge $\frac{1}{2}$ is not fully understood, cf.\@ \cite{DR2}. In Section 4, we additionally assume $\operatorname{char} \mathbb{F} \neq 3,31$, though we remark on how to resolve these characteristics in \cref{rem:kar31}. Throughout, whenever we mention a vertex operator algebra, it is always assumed to be of strong CFT type.
 \subsection*{Acknowledgments}
The second author is supported by the FWO PhD mandate 1105425N. We are grateful to Tom De Medts for interesting discussions and significantly improving the exposition of this article.

\section{Preliminaries}
In this section, based on \cite{DR2}, we repeat the necessary definitions and basic properties regarding VOAs. This is not a comprehensive introduction to VOAs. We refer to \cite{LL04,K97} for excellent introductions to the topic.

\subsection{Vertex operator algebras} For now, let $\mathbb{F}$ be a field of characteristic not $2$. Later, we additionally assume that the characteristic is not $7$.

\begin{definition}
	The Virasoro Lie algebra over a field $\mathbb{F}$ is the Lie algebra $\Vir$ generated by $\{\L n |n\in\ZZ\}\cup \{\mathbf c\}$ where $\mathbf c$ is a central element central and \begin{equation}\label{eq:vir} [\L m ,\L n] = (m-n)\L{m+n}+\delta_{m+n,0}\frac{m^3-m}{12}\mathbf{c}.
		 \end{equation}
\end{definition}

\begin{definition}
A \emph{vertex operator algebra of strong CFT type} over $\mathbb{F}$ is a quadruple $(V,Y,\vacuum, \ccv)$ consisting of the following data. The space $V$ is an $\mathbb{N}$-graded $\mathbb{F}$-vector space $V = \oplus_{n\in\mathbb{N}}V_n$ with $V_0$ one-dimensional and the other graded components finite dimensional. This space contains two distinguished elements: $\vacuum \in V_0$ called the vacuum vector and $\ccv\in V_2$ called the Virasoro vector. Furthermore,  $Y\colon V\to \End(V)[[z,z^{-1}]]$ is a linear map, where $z$ is a formal variable. 
For $v\in V$ we use $v_n$ to denote the coefficient of $z^{-n-1}$ in $Y(v,z)$: 
\[Y(v,z) = \sum_{n\in\ZZ} v_nz^{-n-1}.\] These $v_n$ are often called \emph{modes}. 

These data should satisfy the following axioms for all $u,v\in V$:
	\begin{itemize}[leftmargin=4em]
		\item[(VA1)] $u_nv = 0$ for $n > N$ for some $N$ depending on $u$ and $v$,
		\item[(VA2)] $Y(\vacuum, z) = \mathrm{id}_V$,
		\item[(VA3)] $v_{-1} \vacuum = v$ and $v_{n} \vacuum = 0$  for all $n \geq 0$,
		\item[(VA4)] Borcherds' identity:
		\begin{multline*}\label{eq:VOA-borcherds}
			  z_0^{-1}\delta\left(\frac{z_1-z_2}{z_0}\right)  Y(u,z_1)Y(v,z_2) 
			- z_0^{-1}\delta\left(\frac{z_2-z_1}{-z_0}\right) Y(v,z_2)Y(u,z_1) \\
			= z_2^{-1}\delta\left(\frac{z_1-z_0}{z_2}\right)  Y(Y(u,z_0)v,z_2);
		\end{multline*}
		\item[(VOA1)]  The modes $\L n=\ccv_{n+1}$ of $\ccv$ form a Virasoro Lie algebra where the central element $\mathbf c$ should act as a scalar $c\in \mathbb{F}$ on $V$. This scalar $c$ is called the \emph{central charge} of $V$;
		\item[(VOA2)] $\L0v = mv$ for $m\in \ZZ$ and $v\in V_m$;
		\item[(VOA3)] The map $\L{-1}$ acts as derivations $Y(\L{-1}v,z) = \dd z Y(v,z)$ for $v\in V$.
		\item[(VOA4)]  $u_nv\in V_{r+s+n-1}$ for $u\in V_r,v\in V_s$.
		\item[(CFT1)] $\L1V_1 =0$.
	\end{itemize}
\end{definition}
We call a VOA \emph{OZ} (\emph{one-zero}) if $V_1$ is the zero space. Note that (VOA4) follows from the other axioms in characteristic zero, since then $V_m $ is equal to $ \{v \in V \mid \L0v = mv \}$. Unfortunately, this is no longer true in positive characteristic. 

VOAs of strong CFT type were introduced in \cite{DM04}. In characteristic zero (CFT1) is equivalent to $V$ admitting an invariant bilinear form in the sense of \cite{FHL} by the work of Li \cite{Li94}. In positive characteristic however, this is a weaker condition, cf.\@ \cite{LQ18}.

Some consequences of Borcherds' identity are compiled in the following lemma, which we will use throughout.
For $V$  a VOA, we define the operators $\D n$ by $\D nv\coloneqq v_{-1-n}\vacuum \ \text{for}\ n \in \ZZ$, for all $v\in V$. We will write $\mathcal D$ in place of $\D1$.
\begin{lemma}\label{lem:formulas} Let $V$ be a VOA of strong CFT type and $u,v\in V$, then the following properties hold :

\begin{enumerate}
	\item skew-symmetry: \[u_nv = \sum_{i\geq0} (-1)^{i+n+1} \D i(v_{n+i}u);\]
	\item iterate formula:  \[(u_mv)_n = \sum (-1)^i\binom{m}{i}(u_{m-i}v_{n+i}-(-1)^mv_{m+n-i}u_i);\]
	\item commutator formula: \[[u_m,v_n] = \sum_{i\geq0} \binom{m}{i} (u_iv)_{m+n-i}.\]
	\item $\mathcal D$-derivative property: \[[\mathcal D, v_n] = [\L{-1},v_n] = (\mathcal D v)_n = (\L{-1}v)_n=-nv_{n-1}\]
\end{enumerate}
\end{lemma}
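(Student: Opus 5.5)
All four identities will be obtained by taking coefficients of suitable monomials in Borcherds' identity (VA4), applied to $\vacuum$ where needed, together with the vacuum axioms (VA2), (VA3) and the derivative axiom (VOA3).

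\emph{Iterate and commutator formulas.} Apply $\operatorname{Res}_{z_0}\operatorname{Res}_{z_1} z_0^m z_1^{k}$ to (VA4) and expand the delta functions. This yields the general Borcherds identity
\[
\sum_{i\ge0}\binom{m}{i}(u_{k+i}v)_{m+n-i}=\sum_{i\ge0}(-1)^i\binom{k}{i}\bigl(u_{m+k-i}v_{n+i}-(-1)^k v_{k+n-i}u_{m+i}\bigr),
\]
where all sums are finite by (VA1). Setting the first index of the left-hand side to $0$, i.e.\ replacing $(m,k)$ by $(0,m)$, gives the iterate formula (ii). Setting $k=0$ gives the commutator formula (iii). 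These are coefficient extractions in formal calculus, valid over any field, since the binomial coefficients $\binom{m}{i}$ are integers.

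\emph{$\mathcal D$-derivative property.} By (ii) with $v=\vacuum$, together with (VA2) and (VA3), we get $(\D{n}v)_m$ in terms of modes of $v$. Concretely, $(v_{-2}\vacuum)_n=\sum_i(-1)^i\binom{-2}{i}\bigl(v_{-2-i}\vacuum_{n+i}-\vacuum_{n-2-i}v_i\bigr)$, and since $\vacuum_j=\delta_{j,-1}$, only one term survives from each part. This gives $(\mathcal Dv)_n=-nv_{n-1}$; the sign and coefficient have to be checked carefully. (VOA3) gives $(\L{-1}v)_n=-nv_{n-1}$. Then (iii) with $u=\omega$, $m=0$ gives $[\L{-1},v_n]=(\L{-1}v)_n$. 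For $[\mathcal D,v_n]$, interpret $\mathcal D$ as the operator $\L{-1}$: from the two formulas we have $\mathcal D v=\L{-1}v$ for all $v$ (both have the same modes and both satisfy $w=w_{-1}\vacuum$, using (VA3)). Hence the operators coincide and all four expressions agree.

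\emph{Skew-symmetry.} Start from (ii) with $m$ replaced by $n$, $v$ and $u$ swapped, and last index $-1$, applied to $\vacuum$. Using $u_i\vacuum=0$ for $i\ge0$, this gives
\[
u_nv=\sum_i(-1)^{i+n+1}\binom{n}{i}\ldots
\]
Alternatively, and more cleanly, I would prove $Y(u,z)v=e^{z\mathcal D}Y(v,-z)u$. For this, apply Borcherds' identity to $\vacuum$ and use the standard argument that $e^{z\mathcal D}$ satisfies $e^{z\mathcal D}v=Y(v,z)\vacuum$. Here $\D{i}$ plays the role of the divided power $\mathcal D^i/i!$, which avoids division by $i!$ in positive characteristic. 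Taking the coefficient of $z^{-n-1}$ then yields (i).

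The main obstacle is making this characteristic-free. The usual proof uses $\mathcal D^i/i!$, so I would instead show directly, from (ii) with $\vacuum$, that $Y(v,z)\vacuum=\sum_i \D{i}v\,z^i$ and that $[\D{i},w_n]$ obeys the divided-power Leibniz rule. I expect this last step to require the most care.
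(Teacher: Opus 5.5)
\textbf{Parts (ii)--(iv).} These are correct, and they are the standard derivations behind the references the paper cites; the paper gives no argument of its own and simply points to Borcherds [B86, Section 4] and Dong--Ren [DR2]. There is one labelling slip: the residue you apply should be $\operatorname{Res}_{z_0}\operatorname{Res}_{z_1}\operatorname{Res}_{z_2}z_0^{k}z_1^{m}z_2^{n}$ rather than $z_0^mz_1^k$. For (iv), the check you deferred does come out as claimed. Using $\vacuum_j=\delta_{j,-1}\mathrm{id}$ and $(-1)^j\binom{-2}{j}=j+1$, the first sum contributes $-nv_{n-1}$ only when $n\le -1$, the second contributes $-nv_{n-1}$ only when $n\ge 1$, and nothing survives when $n=0$.

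\textbf{Part (i) is not proved.} Your first attempt does not work as described. Applying (ii) with $u,v$ swapped and last index $-1$ to $\vacuum$ collapses to the tautology $v_nu=v_nu$, because $u_{i-1}\vacuum=0$ for $i\ge1$ and $v_i\vacuum=0$ for $i\ge0$. The iterate formula is the specialization of Borcherds' identity with $\binom{0}{i}$ on the left, so it does not produce the alternating sum of $\D{i}$-terms that (i) requires. Your second route, through $e^{z\mathcal D}$ and a divided-power Leibniz rule for $[\D{i},w_n]$, stops exactly at the step you call hardest.

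\textbf{The fix.} No exponential is needed. In your general identity, interchange $u$ and $v$, take the first index $m=-1$ and $n=0$, and apply both sides to $\vacuum$:
\[
\sum_{i\ge0}(-1)^i(v_{k+i}u)_{-1-i}\vacuum=\sum_{i\ge0}(-1)^i\binom{k}{i}\bigl(v_{k-1-i}u_{i}\vacuum-(-1)^ku_{k-i}v_{i-1}\vacuum\bigr).
\]
On the right, $u_i\vacuum=0$ for $i\ge0$ kills the first part, and only $i=0$ survives in the second, giving $-(-1)^ku_kv$. On the left, $(v_{k+i}u)_{-1-i}\vacuum=\D{i}(v_{k+i}u)$ by the paper's definition of $\D{i}$, and the sum is finite by (VA1). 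This is exactly (i) with $n=k$. Because $\D{i}$ is defined as $x\mapsto x_{-1-i}\vacuum$, the divided powers are built in from the start. So the positive-characteristic obstacle you anticipated never arises, and all four parts follow from a single coefficient extraction.
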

The formulas are a consequence of Borcherds' axioms of vertex algebras, see \cite[Section 4]{B86} and \cite{DR2}. Specifically for VOAs of strong CFT type, we also have the following fact which we will use throughout.
\begin{lemma}\label{L1V10}
	Let $a,b\in V_n$ be homogeneous of weight $n>0$, and $k\in \Z$. We have
	\begin{enumerate}
		\item  $a_{2n-1}b = b_{2n-1}a $:
		\item	$a_{2n-2}b = -b_{2n-2}a$ ;
		\item If $\L1a=0$, then $[\L1, a_k] =  (2n - k -2)a_{k+1}$. 
	\end{enumerate}
\end{lemma}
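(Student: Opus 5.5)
Parts (i) and (ii) follow from skew-symmetry (\cref{lem:formulas}(i)) together with the grading (VOA4), and part (iii) from the commutator formula (\cref{lem:formulas}(iii)) applied with $u=\ccv$, $m=2$.

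For (i) and (ii), take $u=a$, $v=b$, both in $V_n$, and let $k=2n-1$ or $k=2n-2$. Skew-symmetry gives
\[
a_kb=\sum_{i\ge0}(-1)^{i+k+1}\D i(b_{k+i}a).
\]
By (VOA4), $b_{k+i}a\in V_{2n-k-i-1}$. For $k=2n-1$ this space is $V_{-i}$, so only the term $i=0$ survives. For $k=2n-2$ it is $V_{1-i}$, so only $i=0$ and $i=1$ survive.

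In the first case, $b_{2n-1}a\in V_0=\mathbb F\vacuum$, and $\D 0$ is the identity by (VA3). The sign is $(-1)^{2n}=1$, which gives (i).

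In the second case we get
\[
a_{2n-2}b=-\,b_{2n-2}a+\mathcal D(b_{2n-1}a).
\]
Since $b_{2n-1}a$ is a scalar multiple of $\vacuum$, the second term is $\mathcal D\vacuum=\vacuum_{-2}\vacuum$, which vanishes by (VA2). Hence $a_{2n-2}b=-b_{2n-2}a$, which is (ii). Note that (ii) uses only the fact that $V_0$ is spanned by the vacuum; it does not need (CFT1).

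For (iii), write $\L1=\ccv_2$ and expand
\[
[\ccv_2,a_k]=\sum_{i\ge0}\binom2i(\ccv_ia)_{2+k-i}=(\ccv_0a)_{k+2}+2(\ccv_1a)_{k+1}+(\ccv_2a)_k .
\]
Here $\ccv_0=\L{-1}$, $\ccv_1=\L0$ and $\ccv_2=\L1$. We evaluate the three terms as follows.
\begin{itemize}
\item The third term vanishes, since $\L1a=0$ by hypothesis.
\item The middle term equals $2n\,a_{k+1}$, since $\L0a=na$ by (VOA2).
\item For the first term, the $\mathcal D$-derivative property (\cref{lem:formulas}(iv)) gives $(\L{-1}a)_{k+2}=-(k+2)a_{k+1}$.
\end{itemize}
Summing gives $(2n-k-2)a_{k+1}$, as claimed.

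The one step needing care is justifying that $\D i$ of a weight-zero vector vanishes for $i\ge1$. This reduces to $\vacuum_{-1-i}\vacuum=0$, which holds because $Y(\vacuum,z)=\mathrm{id}_V$, so the only nonzero mode of $\vacuum$ is $\vacuum_{-1}$. The strong CFT hypothesis is not used in (i)–(iii) beyond $\dim V_0=1$; (CFT1) would enter only if one wanted to verify $\L1a=0$ automatically for $n=1$.
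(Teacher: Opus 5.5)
Your proof is correct and follows the paper's argument: skew-symmetry together with the grading for (i) and (ii), and the commutator formula for $\ccv_2$ followed by the $\mathcal D$-derivative property for (iii). The only differences are cosmetic. You kill $\mathcal D(b_{2n-1}a)$ via $\vacuum_{-2}\vacuum=0$ from (VA2), whereas the paper phrases this as $\L{-1}\vacuum=0$. You also read (VOA4) in its intended form $u_nv\in V_{r+s-n-1}$, which is correct; the paper's statement of that axiom has a sign typo.
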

\begin{proof}
	The first claim follows from skew-symmetry (\cref{lem:formulas}(i)) and the fact that $a_kb$ is of negative weight for $k\geq 2n$ by (VOA4), hence zero.
	
	The second claim follows from the fact that $\L{-1}\vacuum= \ccv_0 \vacuum = 0$ by (VA3). Indeed, if $a,b\in V_1$, then $a_1b\in V_0 = \mathbb{F}1$. By skew-symmetry, we have $b_{2n-2}a = (-1)^{2n-1}(a_{2n-2}b - \mathcal{D}(a_{2n-1}b))$. In this expression, the terms involving $\D i$ with $i>1$ disappear since $V$ is nonnegatively graded, and $\mathcal{D}(a_{2n-1}b)=0$ since $\mathcal{D}(a_{2n-1}b) = \L{-1}(a_{2n-1}b) = 0$.
	
	The third claim follows from the commutator formula (\cref{lem:formulas}(iii)):
	\begin{align*}
		[\L1,a_k] = [\ccv_2,a_k] &= (\ccv_0a)_k+ 2(\ccv_1a)_{k+1} + (\ccv_2a)_k\\
		&= (\L{-1}a)_{k+2}+2na_{k+1} + 0.
	\end{align*}
	Applying the $\mathcal{D}$-derivative property (\cref{lem:formulas}(iv)) to the right-hand side proves the claim.
 \end{proof}

We will consider the structure of a VOA $V$ as a module over a vertex subalgebra 
$W$. To do so, we define both vertex subalgebras and admissible modules.
\begin{definition} A VOA $(W,Y_{W},\vacuum_{W},\ccv_W)$ is a \emph{vertex subalgebra} of a VOA $(V,Y,\vacuum, \ccv)$, if $W$ is a subspace of $V$, for all $u,v\in W$ the series $Y_W(u,z)v$ and $Y(u,z)v$ coincide, and $\vacuum_{W}$ equals $\vacuum$.
\end{definition}

\begin{definition}
An \emph{admissible module} for a VOA $V$ is an $\mathbb{N}$-graded vector space $M = \bigoplus_{n\in\mathbb{N}}M_n$ over $\mathbb{F}$ with a linear map 
\[Y_M\colon V\to \End(M)[[z,z^{-1}]]:v\mapsto Y_M(v,z)=\sum_{n\in\ZZ}v_nz^{-1-n},\] satisfying the following axioms for $w\in M$ and $u,v\in V$:
\begin{itemize}[leftmargin=4em]
		\item[(M1)] $v_nw = 0$ for $n$ sufficiently large;
		\item[(M2)] $Y_M(\vacuum, z) = \mathrm{id}_M$;
		\item[(M3)] Borcherds' identity
		\begin{multline*}\label{eq:VOA-borcherds}
			  z_0^{-1}\delta\left(\frac{z_1-z_2}{z_0}\right)  Y_W(u,z_1)Y_W(v,z_2) 
			- z_0^{-1}\delta\left(\frac{z_2-z_1}{-z_0}\right) Y_W(v,z_2)Y_W(u,z_1) \\
			= z_2^{-1}\delta\left(\frac{z_1-z_0}{z_2}\right)  Y_W(Y(u,z_0)v,z_2);
		\end{multline*} 
		\item[(M4)] $v_mM_{n} \in M_{n+s-m-1}$ for $v\in V_s$.
	\end{itemize}
\end{definition}
We use the same notation $v_n$ for the operator in $\End(V)$ and the operator in $\End(W)$. It should be clear from the context how to interpret this notation. 

\subsection{The Virasoro VOA of central charge $\frac12$} Now we introduce the simplest and one of the key VOAs for this article. 
Recall the Lie algebra $\Vir$ with basis $\{\L n| n\in \Z\}\cup\{\mathbf c\}$ satisfying the Virasoro relations \eqref{eq:vir}. The Lie subalgebra spanned by $\L i$, $i\geq0$ is denoted by $\Vir^+$. From it we can construct a VOA and its modules as follows.

 Write $U(\Vir)$ for the universal enveloping algebra of $\Vir$ and let $c,h\in \mathbb{F}$ be scalars. Construct the $\Vir^+$-module
\[ V(c,h) \coloneqq U(\Vir)\otimes_{U(\Vir^+)}\mathbb F,\] where $\L0$ acts on $\mathbb F$ as multiplication by $h$, $\mathbf{c}$ by multiplication by $c$, and $\L n$ (for $n > 0$) as multiplication by $0$.  Then $V(c,h)$ is also a $\Vir$-module, by letting $\Vir$ act on the left side of the tensor product. The following theorem endows these spaces with a module structure for a VOA.

\begin{theorem}[\cite{DR16}]\label{thm:virvoa} Suppose $c\neq 0,h\in \mathbb F$. Then:
	\begin{enumerate}
		\item $V(c,h)$ can be endowed with a grading, and it has a unique maximal graded $\Vir$-submodule. The quotient is written as $L(c,h)$.
		\item $V(c,0)$ can be endowed with the structure of an OZ VOA generated by its conformal vector $\ccv$ with $Y(\ccv,z) = \sum_{n\in \Z}\L n z^{-n-2}$,
		\item The simple quotient $L(c,0)$ is a simple VOA quotient of $V(c,0)$. 
		\end{enumerate}
\end{theorem}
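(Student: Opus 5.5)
We prove the three parts in order, working over an arbitrary field $\mathbb F$ with $\operatorname{char}\mathbb F\neq 2$ and $c\neq 0$.

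\emph{Part (i).} By the PBW theorem for $U(\Vir)$, $V(c,h)$ has a basis of monomials $\L{-n_1}\cdots\L{-n_k}\otimes 1$ with $n_1\geq\dots\geq n_k\geq 1$. We grade $V(c,h)$ by declaring such a monomial to have degree $n_1+\dots+n_k$. Then $V(c,h)_0=\mathbb F(1\otimes 1)$ is one-dimensional, each graded piece is finite dimensional, and $\L m$ shifts degree by $-m$. We deliberately use this grading rather than $\L0$-eigenvalues, since in positive characteristic the $\L0$-eigenvalue $h+n$ only sees $n$ modulo $p$. A graded submodule $N$ is proper exactly when $N_0=0$: if $N_0\neq 0$ it contains the generator $1\otimes 1$, and conversely. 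Hence the sum of all proper graded submodules is graded with zero degree-$0$ part, so it is the unique maximal graded submodule, and we define $L(c,h)$ as the quotient.

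\emph{Part (ii).} Here we follow the standard Frenkel--Zhu/Lepowsky--Li construction, checking that it is characteristic-free. Set $\vacuum=1\otimes1$ and $\ccv=\L{-2}\vacuum$. Since $\L{-1}\vacuum=0$ in $V(c,0)$ (because $\L{-1}(1\otimes 1)$ lies in the image of the $\Vir^+$-relations when $h=0$; more precisely one takes the induced module from $\Vir_{\geq -1}$), the degree-$1$ piece vanishes, which gives the OZ property. Define $L(z)=\sum_n \L n z^{-n-2}$. Using the Virasoro relations \eqref{eq:vir}, whose denominators $12$ cause no trouble because the central term is computed in $\mathbb F$ with $c$ given, we verify that $L(z)$ is a field that is mutually local with itself, with
\[
(z_1-z_2)^4[L(z_1),L(z_2)]=0 .
\]
We then invoke the local-system theorem (Li's theorem that a set of mutually local fields on a module with a vacuum generates a vertex algebra). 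Its proof uses only Borcherds' identity and formal-variable calculus with integer binomial coefficients, so it works over any field. This endows $V(c,0)$ with a vertex algebra structure in which $Y(\ccv,z)=L(z)$, and this structure is generated by $\ccv$ because $V(c,0)$ is spanned by the $\ccv$-modes applied to $\vacuum$. The axioms (VOA1)--(VOA4) then follow directly from the construction and the grading of part (i).

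\emph{Part (iii).} The maximal graded submodule $J$ from part (i) is stable under all $\L n$. Since every $Y(v,z)$ is a normally ordered product of $L(z)$ and its derivatives, $J$ is stable under every $v_n$. By skew-symmetry (\cref{lem:formulas}(i)), it is therefore a two-sided ideal, so $L(c,0)=V(c,0)/J$ inherits the VOA structure. It is simple because any ideal is a graded $\Vir$-submodule, so its preimage in $V(c,0)$ must be all of $V(c,0)$ or contained in $J$.

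\emph{Main obstacle.} The step we expect to be hardest is ensuring that Li's locality theorem and the Dong's lemma style arguments hold in positive characteristic, where divided powers $\frac{1}{n!}\partial^n$ may be needed in place of plain derivatives. Our first approach would be to use the $\D n$ operators already introduced in the paper, which serve exactly as divided-power derivatives.
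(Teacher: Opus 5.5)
The paper gives no argument for this theorem; it is quoted from Dong--Ren \cite{DR16}. So your outline has to stand on its own. Part (i) is fine: the PBW grading and the ``proper iff the degree-$0$ part vanishes'' argument are characteristic-free.

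In (ii) there is first a slip. With the paper's definition, $V(c,0)$ is the Verma module, and there $\L{-1}(1\otimes 1)\neq 0$ by PBW; it is a singular vector, not zero. So (ii) only holds for the quotient by the submodule that vector generates, i.e.\ your $\Vir_{\geq -1}$-induced module. You should say this explicitly, and note for (iii) that $L(c,0)$ is unchanged: that submodule is graded with zero degree-$0$ part, hence lies in the maximal one.

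The genuine gap is the claim that Li's theorem ``works over any field because only integer binomial coefficients appear.'' Building the vertex algebra $A=\langle L(z)\rangle$ of fields, with $V(c,0)$ as an $A$-module, is indeed characteristic-free. On fields the Hasse derivatives $\partial_z^{(n)}$ exist for free, so Dong's lemma is not where the difficulty lies. What you never address is transporting the structure back to $V(c,0)$, i.e.\ injectivity of $\phi\colon A\to V(c,0)$, $a(z)\mapsto a(z)_{-1}\vacuum$. The usual argument uses $a(z)\vacuum=e^{z\L{-1}}a_{-1}\vacuum$ together with Goddard uniqueness. In characteristic $p$, however, the relation $[\L{-1},a(z)]=\partial_z a(z)$ only determines the coefficients of $z^i$ with $i<p$. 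Your proposed remedy is circular: the paper's $\D{n}v\coloneqq v_{-1-n}\vacuum$ is defined through the very map $Y$ you are trying to construct.

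A characteristic-free fix is available. The map $\phi$ intertwines the $\Vir$-actions, by the vacuum-like vector argument via the iterate formula. Moreover, $A$ is spanned by the vectors $L(z)_{n_1}\cdots L(z)_{n_k}\mathrm{id}$, and $L(z)_n\mathrm{id}=0$ for $n\geq 0$ (i.e.\ $\L{n}\mathrm{id}=0$ for $n\geq -1$). The universal property of the induced module then gives a $\Vir$-surjection $\pi\colon V(c,0)\to A$ with $\pi(\vacuum)=\mathrm{id}$. Now $\phi\circ\pi$ is a $\Vir$-endomorphism fixing the generator $\vacuum$, hence the identity, so $\phi$ is bijective. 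Alternatively, take the $\mathbb{Z}[\frac12][t]$-span of the PBW monomials inside the characteristic-zero vertex algebra of central charge $t$, check via the iterate formula that it is closed under all modes, and specialize $t\mapsto c$.

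The same characteristic-$p$ issue reappears in (iii), in three places.
\begin{enumerate}
\item Skew-symmetry turns the left ideal $J$ into a two-sided ideal only if $\D{i}J\subseteq J$. This is not automatic, since for $i\geq p$ the operator $\D{i}$ is not a polynomial in $\L{-1}$. It does hold: by the relations $\D{i}v_m=\sum_{j=0}^{i}(-1)^j\binom{m}{j}v_{m-j}\D{i-j}$, the space $\sum_i\D{i}J$ is a graded $\Vir$-submodule with zero degree-$0$ part, hence contained in $J$.
\item ``Every ideal is graded'' is exactly the $\L0$-eigenvalue argument you rightly rejected in (i). Simplicity follows instead as follows. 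For $0\neq v$ in an ideal with top component $v^{(N)}$, choose a homogeneous $x\in U(\Vir)$ of degree $-N$ with $xv^{(N)}=\vacuum$, which exists by graded simplicity. Then $x$ kills all lower components, so $xv=\vacuum$.
\item In characteristic $3$ the central term makes sense because $\frac{m^3-m}{12}=\frac12\binom{m+1}{3}\in\mathbb{Z}[\frac12]$, not because one can divide by $12$ in $\mathbb{F}$.
\end{enumerate}
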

The constant $c$ is called the \emph{central charge} and the constant $h$ is called the \emph{weight} of the module.

As in \cite{M96}, we will use the structure theory of modules of the Virasoro algebra $L(c,0)$ with central charge $c= \frac{1}{2}$ to determine the fusion law of certain idempotents in the algebras under discussion. 
\begin{definition}\label{def:Ising} Let $V$ be a VOA of strong CFT type, and $e\in V_2$. Write $\mathrm{Vir}(\iv)$ for the vertex subalgebra generated by $\iv\in V_2$.  We call $\iv$  an \emph{Ising vector} if $\mathrm{Vir}(\iv) \cong L(\frac{1}{2},0)$, where $\iv$ maps to the conformal vector of $L(\frac{1}{2},0)$.
\end{definition}
 Ising vectors have received much attention in the literature, see \cite{M96,DR16,JLY1,JLY2,Mat05}. We can now state the main proposition of the preliminaries. 
 \begin{proposition}\label{prop:fuse}
 Let $V$ be a VOA over a field $\mathbb{F}$ with $\operatorname{char} \mathbb{F}\neq 2,7$, and $\iv$ an Ising vector vector in $V_2$. Let $W\leq V$ be the  vertex subalgebra generated by $\iv$. Then we have the following:
 \begin{enumerate}
 \item The $W$-module $V$ is completely reducible into irreducible $W$-modules, which are isomorphic to $L(\frac12,h)$ for $h$ either $ 0$, $\frac12$, or $\frac1{16}$. 
 \item Writing $U_h$ for the $L(\frac12,h)$-isotypic component of $V$, we have
 \begin{equation}\label{eq:prop:fuse}
 	Y(U_g,z)U_h \subseteq \bigoplus_{i\in g\star h} U_i[[z,z^{-1}]],
 \end{equation} 
 where $\star$ is the following fusion law: \[\begin{array}{c|ccc}
 \star & 0 & \frac12 & \frac 1{16}	\\\hline
 0 & 0 & \frac 12 &\frac1{16}\\
 \frac12 & \frac 12 & 0 & \frac1{16}\\
 \frac1{16} & \frac1{16} &\frac1{16} & 0,\frac12
 \end{array}\]

 \end{enumerate}
\end{proposition}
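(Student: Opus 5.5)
This proposition is essentially a repackaging of the representation theory of $L(\frac12,0)$ over $\mathbb{F}$ from \cite{DR2} (and \cite{DR16}), in the spirit of \cite{M96}. The plan is to reduce both parts to statements about the rational VOA $W\cong L(\frac12,0)$ and its modules.

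For part (i), we first check that $V$ is an admissible $W$-module in the sense of the definition above. The restriction of $Y$ to $W$ satisfies (M1)--(M3) by (VA1), (VA2) and (VA4). For the grading (M4) we use the $\mathbb{N}$-grading of $V$ itself, since $\iv_m V_n\subseteq V_{n+1-m}$ by (VOA4); here the $W$-weights are measured by $\iv_1$ rather than $\L0$, but admissibility only needs some compatible $\mathbb N$-grading. We then invoke the rationality of $L(\frac12,0)$ in characteristic $\neq 2,7$ from \cite{DR2}: every admissible module is completely reducible, and the irreducible ones are exactly $L(\frac12,0)$, $L(\frac12,\frac12)$ and $L(\frac12,\frac1{16})$. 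Over $\mathbb C$ this is the classical Wang/Dong--Mason--Zhu result. In positive characteristic it relies on the singular vector of $V(\frac12,0)$ at weight $6$ still generating the maximal submodule, and on Zhu's algebra $A(L(\frac12,0))\cong\mathbb F[x]/(x(x-\frac12)(x-\frac1{16}))$ being semisimple. That semisimplicity needs $0,\frac12,\frac1{16}$ distinct, which explains why characteristics $2$ and $7$ are excluded (in characteristic $7$, $\frac1{16}=\frac12$). One subtle point is that $V$ is not assumed to be a direct sum of finitely many modules. However, each $V_n$ is finite dimensional and $\iv_1$-stable, so every vector lies in a finitely generated submodule. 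Complete reducibility of finitely generated admissible modules then gives the isotypic decomposition $V=U_0\oplus U_{1/2}\oplus U_{1/16}$.

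For part (ii), we follow the standard intertwining operator argument. Fix irreducible summands $M^1\cong L(\frac12,g)$ inside $U_g$ and $M^2\cong L(\frac12,h)$ inside $U_h$. Let $\pi_k$ be the projection of $V$ onto $U_k$ along the other isotypic components, and choose an irreducible summand $M^3\cong L(\frac12,k)$ together with its projection. Then $u\otimes v\mapsto \pi_{M^3}Y(u,z)v$ for $u\in M^1$, $v\in M^2$ is an intertwining operator of type $\binom{M^3}{M^1\ M^2}$. This follows because Borcherds' identity (VA4) with one argument taken from $W$ yields the Jacobi identity for intertwining operators, and the projection commutes with the modes of $W$. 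It therefore suffices to know which fusion rules $N^{k}_{g,h}$ are nonzero. Using the description of the fusion rules via the $A(W)$-bimodules $A(M)$ (Frenkel--Zhu / Li), computed in \cite{DR2} over $\mathbb F$, these rules are exactly those given by the table. Hence $\pi_{M^3}Y(u,z)v=0$ whenever $k\notin g\star h$, which gives \eqref{eq:prop:fuse}.

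The main obstacle is the positive characteristic input. We must ensure that the fusion rule computation and rationality hold over $\mathbb F$ when $\operatorname{char}\mathbb F\neq2,7$, in particular that the needed denominators (powers of $2$ and factors coming from $h$-differences such as $\frac12-\frac1{16}=\frac7{16}$) are invertible. We would not redo this computation but cite \cite{DR2} for it. We would only verify carefully that the Frenkel--Zhu bimodule argument, which gives an upper bound on the fusion rules, uses nothing beyond the invertibility of these constants.
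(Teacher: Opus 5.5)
Part (i) is fine and agrees with the paper, which simply cites \cite[Theorems~4.6 and~4.9]{DR16}. The gap is in part (ii). The map $u\otimes v\mapsto \pi_{M^3}Y(u,z)v$ is \emph{not} an intertwining operator of type $\binom{M^3}{M^1\ M^2}$ for $W$.

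You do get the Jacobi identity from (VA4). However, an intertwining operator must also satisfy the $L(-1)$-derivative property for the conformal vector of $W$, namely $\mathcal{Y}(e_0u,z)=\frac{d}{dz}\mathcal{Y}(u,z)$. In $V$, (VOA3) only gives $\frac{d}{dz}Y(u,z)=Y(\omega_0u,z)$, and $\omega_0\neq e_0$; the error term is $\pi_{M^3}Y((\omega-e)_0u,z)$. This is not cosmetic. The Jacobi identity alone is preserved under multiplication by any power of $z$, so it cannot detect fusion rules. Moreover, in characteristic $0$, a nonzero intertwining operator of type $\binom{L(\frac12,0)}{L(\frac12,\frac1{16})\ L(\frac12,\frac1{16})}$ has exponents in $-\frac18+\mathbb{Z}$, while $\pi_{M^3}Y(u,z)$ has integral exponents. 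So your operator is in general not an intertwining operator. Consequently \cite[Theorem~6.5]{DR2}, or any Frenkel--Zhu type bound (whose proof uses the derivative property), cannot be applied to it.

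The missing step is how the paper, following Miyamoto \cite{M96}, proceeds:
\begin{enumerate}
\item $(\omega-e)_1$ commutes with all modes $e_n$.
\item Choose the $M^i$ homogeneous, i.e.\ generated by a lowest weight vector $v^i\in V_{m_i}$; this is possible because $[\omega_1,e_1]=0$. Then $(\omega-e)_1$ acts on $M^i$ by the scalar $\lambda_i=m_i-h_i$ (using integer representatives in positive characteristic).
\item Take complements from a homogeneous decomposition, so that $\pi_{M^3}$ commutes with $(\omega-e)_1$.
\item The commutator formula $[(\omega-e)_1,Y(u,z)]=zY((\omega-e)_0u,z)+Y((\omega-e)_1u,z)$ gives $z\,\pi_{M^3}Y((\omega-e)_0u,z)v=(\lambda_3-\lambda_1-\lambda_2)\,\pi_{M^3}Y(u,z)v$ for $u\in M^1$, $v\in M^2$.
\item Hence the shifted operator $z^{\lambda_1+\lambda_2-\lambda_3}\pi_{M^3}Y(u,z)$ does satisfy the derivative property with respect to $e$.
\end{enumerate}
Only this shifted operator is an intertwining operator, and its vanishing for $h_3\notin h_1\star h_2$ is what yields \eqref{eq:prop:fuse}. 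With arbitrary, non-homogeneous summands, as in your proposal, $(\omega-e)_1$ need not act by a scalar and no single shift works, so you need that reduction as well.
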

 The proof generalizes Miyamoto's arguments in \cite{M96} to arbitrary characteristic, building on the work of Dong and Ren \cite{DR2,DR16}. Since introducing all relevant notions would fall out of the scope of the article, we will be brief in the following proof.
 \begin{proof}
	As $e$ is an Ising vector, the VOA $W$ is isomorphic to the VOA $L(\frac12,0)$. By \cite[Theorem~4.9]{DR16}, $V$ is completely reducible as a $W$-module. By \cite[Theorem~4.6]{DR16}, irreducibles are isomorphic to $L(\frac12,h)$ for $h$ either $ 0$, $\frac12$, or $\frac1{16}$. This proves the first statement.
	 
	 Fix $h_1,h_2,h_3 \in \{0,\frac12,\frac1{16}\}$ arbitrary. Let $M^i\leq V$ be irreducible $W$-submodules isomorphic to $L(\frac12,h_i)$, chosen arbitrary. Then in order to prove the inclusion \eqref{eq:prop:fuse} it suffices to show that
	 \[\pi_{M^3}(Y(M^1,z)M^2) = 0 \text{ if } h_3 \not\in h_1\star h_2,\] where $\pi_{M^3}$ is the projection onto the module $M^3$. One can assume the $M^i$ are \emph{homogeneous}, i.e., that the operator $\L0 = \ccv_1$ acts internally on $M^i$, because the actions of $\L 0$ and $e_1$ commute by the following computation:
	 \begin{align*}
	 	[\ccv_1 ,e_1] &= (\ccv_0 e)_2 + (\ccv_1 e)_1 &{\quad \text{(\cref{lem:formulas}(iii))}}\\
	 							&= -2e_1 + 2e_1 =0 &{\quad\text{(\cref{lem:formulas}(iv))}}.
	 \end{align*}
	 In particular, $M^i$ is generated by an element $v_i$ such that $e_nv^i =0$ for $n>1$, $e_1v^i = h_iv^i$ and $L_0v_i = m_iv^i$ for some $m_i\in \Z$ (resp.\@ $\in \mathbb{Q}$) when $\operatorname{char}\mathbb{F} > 0  $ (resp.\@ $\operatorname{char}\mathbb{F}=0$).
	 
	 To prove the claim now, one can argue as in \cite[Proposition~4.4]{M96}. Briefly, as in \cite[Lemma~4.1]{M96}, the operator $\ccv_1-e_1$ commutes with the action of $\Vir(e)$. Since it also acts as a scalar $\lambda_i = h_i - m_i$ on each $v^i$, it acts as scalars  $\lambda_i \in  \Z$ (resp. $\in \mathbb{Q}$) when $\operatorname{char} \mathbb{F} >0$ (resp. $=0$) on $M^i$.  
	 
	 Then the map 
	 \[M^1 \to \Hom(M^2,M^3)\{z\} \colon v \mapsto \pi_{M^3}\circ Y(v,z)z^{\lambda_3-\lambda_2-\lambda_1}\]
	  is an $\binom{M^1 \ M^2}{M^3}$-intertwiner in the sense of \cite[p.\@  503]{DR16}. Hence this map should be identically zero whenever $h_3 \not \in h_1 \star h_2 $ by \cite[Theorem~6.5]{DR2}. This proves the second claim.
 \end{proof}
 \section{Generalized Griess algebras}
In this section we generalize the notion of a Griess algebra to the non-OZ setting. Throughout this section we assume that $V$ is a VOA of strong CFT type. Recall that the \emph{Gries algebra} of an OZ VOA $V$ is the degree two space $V_2$ together with the product $a,b \to a_1b$. Borcherds \cite[Section 9]{B86} suggests two methods to construct algebras from $V$. 

The first method defines products $*_k$ by $a*_kb \coloneqq a_{k-1} b$ on the graded component $V_k$. The grading on $V$ ensures that $*_k$ is internal. In general, these products are not commutative.
The second method defines products $\times_k$ on $V_{k+2}$ by 
\begin{equation}\label{eq:timesk}
	a\times_k b \coloneqq \sum_{i\geq 0} \frac{(-1)^i}{i+1} \D i a_{k+i+1}b.
\end{equation}
This product is commutative (resp.\@ anti-commutative) when $k$ is even (resp.\@  odd).
We will only be interested in $\times_0$, restricted to the weight $2$ subspace. In that case, it suffices to require $\operatorname{char} \mathbb{F} \neq 2$ for \eqref{eq:timesk} to make sense, since the terms in $i>1$ disappear. 
 For further reference, observe that 
 \[a\times_0 b = \frac{1}{2}(a_1b + b_1a),\]
 by applying skew-symmetry (\cref{lem:formulas}(i)).
When $V$ is OZ, the maps  $*_2$ and $\times_0$ coincide, and  $*_2$ is precisely the multiplication of the corresponding Griess algebra. In the non-OZ case, $\times_0$ seems to be the more natural product to consider.
 
It turns out that $\times_0$ restricts nicely to a natural subspace of $V_2$. Observe the following decomposition.
\begin{lemma}\label{eq:decomposition-V2}
\begin{equation*}
	V_2 = \L{-1} V_1 \oplus \operatorname{ker}{\L1|_{V_2}}.
\end{equation*}
\end{lemma}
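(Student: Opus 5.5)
The plan is to use the Virasoro relation $[\L1,\L{-1}]=2\L0$ together with (CFT1) to show that $\L1\L{-1}$ acts on $V_1$ as multiplication by $2$. Since $\operatorname{char}\mathbb{F}\neq 2$, this makes $\tfrac12\L{-1}\L1$ an idempotent operator on $V_2$ with image $\L{-1}V_1$ and kernel $\ker \L1|_{V_2}$, which is exactly the claimed decomposition.

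\textbf{Step 1: the operators respect the grading.} $\L{-1}=\ccv_0$ maps $V_1$ into $V_2$ and $\L1=\ccv_2$ maps $V_2$ into $V_1$. This follows from (VOA4) applied to $\ccv\in V_2$: the mode $\ccv_n$ shifts the weight by $1-n$. Equivalently, it follows from the derivation rules in \cref{lem:formulas}(iv) and \cref{L1V10}(iii). I read (VOA4) with the intended index convention, under which $u_nv$ has weight $r+s-n-1$.

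\textbf{Step 2: the key identity on $V_1$.} For $a\in V_1$ we get
\[\L1\L{-1}a = \L{-1}\L1 a + [\L1,\L{-1}]a = \L{-1}\L1 a + 2\L0 a = 2a.\]
The bracket is computed from \eqref{eq:vir}; the central term vanishes because $m+n\neq 0$. We also use $\L1 a=0$ by (CFT1) and $\L0a=a$ by (VOA2).

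\textbf{Step 3: the decomposition.} For $v\in V_2$, write
\[v=\tfrac12\L{-1}\L1 v + \bigl(v-\tfrac12\L{-1}\L1 v\bigr).\]
The first summand lies in $\L{-1}V_1$, since $\L1v\in V_1$ by Step 1. Applying $\L1$ to the second summand and using Step 2 with $a=\L1v$ gives $\L1v-\tfrac12\cdot 2\L1v=0$, so the second summand lies in $\ker\L1|_{V_2}$. Hence the two subspaces span $V_2$.

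\textbf{Step 4: the sum is direct.} Suppose $\L{-1}a\in\ker\L1$ with $a\in V_1$. Then Step 2 gives $0=\L1\L{-1}a=2a$, so $a=0$, and therefore $\L{-1}a=0$. This proves the lemma.

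There is no serious obstacle. The only points needing care are the invertibility of $2$ in $\mathbb{F}$ and the correct reading of the grading axiom (VOA4).
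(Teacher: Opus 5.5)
Your proof is correct and is essentially the paper's argument: write $v\in V_2$ as $\tfrac12\L{-1}\L1v+\bigl(v-\tfrac12\L{-1}\L1v\bigr)$, and use $\L1\L{-1}=2$ on $V_1$ for both spanning and directness. Your coefficient $\tfrac12$ is the correct one, whereas the paper's $\tfrac14$ would give $\L1\bigl(v-\tfrac14\L{-1}\L1v\bigr)=\tfrac12\L1v$, which is nonzero in general.

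There are two small slips in your justifications. First, in $[\L1,\L{-1}]$ one has $m+n=0$, so the central term vanishes because $m^3-m=0$ for $m=1$, not because $m+n\neq0$. Second, the grading claim in Step~1 has to rest on (VOA4) alone, since the commutator identities you mention as an alternative do not by themselves determine weights.
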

\begin{proof}
	We have $V_2 = \L{-1} V_1 + \operatorname{ker}{\L1|_{V_2}}$ since   $v = \frac{1}{4}\L{-1}\L1v + (v-\frac{1}{4}\L{-1}\L1v)$ for any $v\in V_2$. Note that  $\L1 \L{-1}v = [\L1\L{-1}] = 2v$ for all $v\in V_1$ by (CFT1), (VOA1) and the Virasoro commutation relations \eqref{eq:vir},  while $\L1 v =0$  for $v\in  \operatorname{ker}{\L1|_{V_2}}$. This implies the sum in the statement of the lemma is direct.
\end{proof}

Moreover, $\times_0$ behaves well with respect to this decomposition, and the multiplication on $\L{-1} V_1$ is rather uninteresting.
\begin{lemma}\label{lemma:invariant-subspace}
	Let $V$ be a VOA of strong CFT type. The product $\times_0$ is zero on $\L{-1} V_1$ and internal on $\operatorname{ker}{{\L1 }|_{V_2}}$.
\end{lemma}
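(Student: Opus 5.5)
The statement has two parts. I read ``zero on $\L{-1}V_1$'' as saying that the product of any two elements of $\L{-1}V_1$ vanishes. I read ``internal on $\operatorname{ker}\L1|_{V_2}$'' as saying that $\L1(a\times_0 b)=0$ whenever $\L1a=\L1b=0$. Both parts should follow from the formula $a\times_0 b=\frac12(a_1b+b_1a)$ together with \cref{lem:formulas} and \cref{L1V10}.

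\emph{First part.} Write $a=\L{-1}u$ and $b=\L{-1}v$ with $u,v\in V_1$, and recall $\L{-1}=\mathcal D$ on $V$. I would compute $a_1b$ and $b_1a$ separately.

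For $a_1b$: the $\mathcal D$-derivative property gives $(\L{-1}u)_1=-u_0$, so $a_1b=-u_0\L{-1}v$. Since $[\L{-1},u_0]=(\L{-1}u)_0=0$, this equals $-\L{-1}(u_0v)$.

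For $b_1a$: by symmetry $b_1a=-v_0\L{-1}u=-\L{-1}(v_0u)$. Skew-symmetry gives $v_0u=-u_0v+\mathcal D(u_1v)$, with no higher terms because $u_1v\in V_0$ and $u_iv=0$ for $i\ge2$. Moreover $\mathcal D(u_1v)\in\mathcal D V_0=0$, since $\L{-1}\vacuum=0$. Hence $v_0u=-u_0v$, so $b_1a=\L{-1}(u_0v)=-a_1b$, and therefore $a\times_0 b=0$.

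As a cross-check, one can use the form $a\times_0 b=a_1b-\tfrac12\mathcal D(a_2b)$ from \eqref{eq:timesk}. Here $a_2b=-2u_1\L{-1}v$. The commutator $[u_1,\L{-1}]=u_0$ and $\L{-1}(u_1v)=0$ give $a_2b=-2u_0v$. So $a\times_0b=-\L{-1}u_0v+\L{-1}u_0v=0$, in agreement.

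\emph{Second part.} Take $a,b\in\ker\L1|_{V_2}$. By \cref{L1V10}(iii) with $n=2$ and $k=1$, we have $[\L1,a_1]=a_2$. Since $\L1b=0$, this gives $\L1(a_1b)=a_2b$, and symmetrically $\L1(b_1a)=b_2a$. Then \cref{L1V10}(ii) with $n=2$ says $a_2b=-b_2a$. So $\L1(a_1b+b_1a)=0$, and $a\times_0 b\in\ker\L1|_{V_2}$. The grading already places $a\times_0 b$ in $V_2$, so the product is internal.

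The only delicate step is the bookkeeping in the first part: one must check that every correction term in the skew-symmetry expansion vanishes, and that the terms in $\L{-1}V_0$ vanish. Both reduce to $\L{-1}\vacuum=0$ together with the nonnegative grading. The second part is immediate from \cref{L1V10}.
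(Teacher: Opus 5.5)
Your proposal is correct and follows the paper's proof essentially step by step. For the first part, you reduce $(\L{-1}u)_1\L{-1}v$ to $-\L{-1}(u_0v)$ via the $\mathcal D$-derivative property and then use antisymmetry of $u_0v$ on $V_1$, which you re-derive from skew-symmetry where the paper simply cites \cref{L1V10}(ii). For the second part, you apply \cref{L1V10}(iii) and (ii) exactly as the paper does; your cross-check via $a\times_0 b=a_1b-\tfrac12\mathcal D(a_2b)$ is also valid and even avoids the antisymmetry step.
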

\begin{proof} The proof is a computation using the formulas from \Cref{lem:formulas}.
For $\ViA,\ViB\in V_1$, the expression \[(\L{-1} \ViA)_1\L{-1} \ViB \stackrel{\text{\ref{lem:formulas}(iv)}}{=} -\ViA_0\L{-1} \ViB=[\L{-1},\ViA_0]\ViB-\L{-1}\ViA_0\ViB\stackrel{\text{\ref{lem:formulas}(iv)}}{=} -\L{-1}\ViA_0\ViB \] is anti-symmetric in $\ViA,\ViB$ by \cref{L1V10}, hence $(\L{-1} \ViA)_1\L{-1} \ViB + (\L{-1} \ViB)_1\L{-1} \ViA = 0$. This proves the first statement. 

For $\ViiA,b\in \operatorname{ker}{\L{1}|_{V_2}}$ we have
\[\L1\ViiA_1\ViiB = [\L1,\ViiA_1]\ViiB  = \ViiA_2\ViiB\]
by \cref{L1V10}(iii) and $\ViiA_2\ViiB = -\ViiB_2\ViiA$ by \cref{L1V10}(ii). We can use this to compute
\[ \L1 (\ViiA \times_0 \ViiB) =\frac{1}{2} (\ViiA_2\ViiB + \ViiB_2\ViiA )= 0, \]
concluding the proof of the second claim.
\end{proof}
\begin{definition}\label{Generalized Griess algebra}
For a VOA $V$ of strong CFT type, we call the space $G(V)\coloneqq \operatorname{ker}{\L1|_{V_2}}$ together with the product $\times_0$ and the bilinear form defined by $(a,b)\vacuum = a_3b$ a \emph{generalized Griess algebra}.
\end{definition}
\begin{remark}
When $V$ is OZ, we recover the original definition by Miyamoto, since $\L1 (V_2)\leq V_1=0 $. 
\end{remark}
In fact, this bilinear form $(\cdot,\cdot)$ is associative in the following sense.
\begin{lemma}\label{lemma:frob_form}
The bilinear form $(\cdot,\cdot)$ on a generalized Gries algebra $G(V)$ is symmetric and satisfies \[(a,b\times_0c) = (a\times_0 b,c),\] for $a$, $b$, and $c\in G(V)$. 
\end{lemma}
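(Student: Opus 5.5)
Symmetry of the form comes straight from \cref{L1V10}(i) with $n=2$: for $a,b\in V_2$ we have $a_3b=b_3a$, hence $(a,b)=(b,a)$.

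For associativity, write $b\times_0 c=\frac12(b_1c+c_1b)$ and $a\times_0 b=\frac12(a_1b+b_1a)$. The identity to prove is
\[a_3(b_1c)+a_3(c_1b) = (a_1b)_3c+(b_1a)_3c.\]
Both sides lie in $V_0=\mathbb{F}\vacuum$. The plan is to expand the right-hand side using the iterate formula (\cref{lem:formulas}(ii)) with $m=1$, $n=3$:
\[(u_1v)_3 = u_1v_3-v_4u_0 - u_0v_4 + v_3u_1 ,\]
up to the signs and binomials given in the formula. Terms with modes that push below weight $0$ vanish: $v_4c\in V_{-1}=0$, so $u_0v_4c=0$. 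The term $v_4u_0c$ equals $v_4$ applied to $u_0c\in V_3$, which lands in $V_0$, so it does not vanish automatically.

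After this expansion the right-hand side is a combination of terms of the shapes $x_1y_3c$, $y_3x_1c$ and $y_4x_0c$, with $x,y\in\{a,b\}$. The term $x_1(y_3c)$ equals $(y,c)\,x_1\vacuum=0$ by (VA3). So each piece reduces to $b_3a_1c$, $a_3b_1c$, $b_4a_0c$ or $a_4b_0c$. Compare these with the left-hand side:
\begin{itemize}
\item The term $a_3(b_1c)$ appears directly.
\item The term $a_3(c_1b)$ is converted by skew-symmetry: $c_1b=b_1c-\mathcal{D}(b_2c)+\D2(b_3c)$. Here $\D2(b_3c)$ is a multiple of $\D2\vacuum=0$, and $a_3\mathcal D(b_2c)=[a_3,\mathcal D]b_2c+\mathcal D a_3b_2c=3a_2b_2c$, using \cref{lem:formulas}(iv) and the fact that $\mathcal{D}\vacuum=0$.
\end{itemize}

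The real content is then showing the remaining terms ($b_3a_1c$ against $a_3b_1c$, the $a_4b_0c$, $b_4a_0c$ terms, and $a_2b_2c$) cancel. This is where the hypothesis $a,b,c\in\ker\L1$ must enter; the statement fails on all of $V_2$ because of the $\L{-1}V_1$ part. I would use the commutator formula $[a_m,b_n]=\sum_i\binom{m}{i}(a_ib)_{m+n-i}$ to rewrite $a_3b_1-b_1a_3$ and $a_4b_0-b_0a_4$ in terms of $(a_0b)_k$, $(a_1b)_k$ and $(a_2b)_k$, $(a_3b)_k$ applied to $c$. The term $(a_3b)_k$ is a scalar times $\vacuum_k$, which is nonzero only for $k=-1$. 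The term $(a_2b)_k c$ with $a_2b\in V_1$ and $k=2$ or $3$ gives $V_1$-modes on $c$. Then $d_2c$ for $d\in V_1$ is controlled by $\L1c=0$: one shows $d_2c=0$ via $(\L{-1}d)_3 c=-3d_2c$ together with skew-symmetry and $\L1c=0$. Similarly, $(a_0b)_k$ with $a_0b=-\L{-1}(\cdot)+\dots$ becomes, by \cref{lem:formulas}(iv), a lower mode.

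I expect this bookkeeping step to be the main obstacle: verifying that, after using $\L1a=\L1b=\L1c=0$ and \cref{L1V10}(iii) to trade $a_k$ against $[\L1,a_{k-1}]$, every leftover weight-$0$ term either cancels pairwise or vanishes. I would first test the identity directly on the symmetric combination, exploiting that the total expression is symmetric in $b,c$ and using symmetry of the form to reduce to showing that $a_3(b_1c)$ is totally symmetric in $a,b,c$ modulo terms killed by $\L1$.
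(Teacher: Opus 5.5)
There is a genuine gap. Your treatment of symmetry and your reduction of both sides are fine. The left side becomes $2a_3b_1c-3a_2b_2c$. With the correct signs in the iterate formula, $(u_1v)_3=u_1v_3+v_4u_0-u_0v_4-v_3u_1$, the right side becomes $b_4a_0c-b_3a_1c+a_4b_0c-a_3b_1c$. But the step you call bookkeeping is the entire content of the lemma, and you do not carry it out.

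Moreover, the relations you propose do not close up. The commutator formulas for $[a_3,b_1]c$ and $[a_4,b_0]c$, their $a\leftrightarrow b$ mirrors, and $(a_2b)_2c=0$ only give two facts: $(a_1b)_3c=(b_1a)_3c$ and $a_3b_1c+b_3a_1c=2(a_1b)_3c$. Once $a_2b_2c=0$, the target is equivalent to $a_3b_1c=b_3a_1c$, and these relations leave that difference undetermined. Separately, your suggested route to $d_2c=0$ through $(\L{-1}d)_3c=-3d_2c$ and skew-symmetry only returns $c_2d=-d_2c$. The actual reason is $c_2d=[\L1,c_1]d=\L1(c_1d)=0$, by \cref{L1V10}(iii) and (CFT1).

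The missing idea, which is what the paper uses, is to apply $\L1$ to composite weight-one vectors. For $x,y,z\in G(V)$, both $x_3y_0z$ and $x_2y_1z$ lie in $V_1$, so (CFT1) kills them. Commuting $\L1$ through with $[\L1,x_k]=(2-k)x_{k+1}$ then gives $x_4y_0z=2x_3y_1z$ and $x_2y_2z=0$. Inserting these into the iterate formula yields $(x_1y)_3z=y_3x_1z$. Symmetry of the form, plus one skew-symmetry step $b_3a_1c=b_3c_1a-3b_2c_2a=b_3c_1a$, then shows that $(a_1b)_3c$ is invariant under all permutations of $a,b,c$. Associativity is immediate from $2(a\times_0b)_3c=(a_1b)_3c+(b_1a)_3c$.

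Within your own framework, the identity $x_4y_0z=2x_3y_1z$ is exactly the missing relation. Equivalently, you could add the commutator $[a_2,b_2]c$ together with $a_2(b_2c)=b_2(a_2c)=0$. This gives $b_4a_0c=2(a_1b)_3c$, and hence $a_3b_1c=b_3a_1c$.
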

\begin{proof}
	The fact that $(\cdot,\cdot)$ is symmetric follows from \cref{L1V10}(i).
	
	 We have $x_3y_0z\in V_1$ for all $x,y,z\in G(V)$ by (VOA4), hence $\L1(x_3y_0z) = 0$ by (CFT1). This means that 
	\begin{equation}\label{eq:frobsub1}
		[\L1,x_3]y_0z = -x_3\L1y_0z = -x_3[\L1,y_0]z,
	\end{equation}
	where on the right-hand side, we used that $\L1z=0$. Applying \cref{L1V10}(iii) to both sides of \eqref{eq:frobsub1}, we obtain that \begin{equation}\label{eq:frobsub2}
		x_4y_0z = 2x_3y_1z \text{ for all }x,y,z\in G(V).
	\end{equation}

	By essentially the same argument, we have \begin{equation}\label{eq:frobsubforgot}
		0= [\L1,x_2]y_1z = -x_2[\L1,y_1]z = -x_2y_2z \text{ for all }x,y,z\in G(V).
	\end{equation}
	
	Now, for $x,y,z\in G(V)$ the iterate formula (\cref{lem:formulas}(ii)) gives:
	\begin{equation*}
		(x_1y)_3z = x_1y_3z+y_4x_0z-x_0y_4z-y_3x_1z.
	\end{equation*}	
	We have $y_4z \in V_{-1}$ and $y_3z\in V_0$, hence $x_0y_4z=0$ and also $x_1y_3z=0$ by (VA3). So,  by \eqref{eq:frobsub2}, we have
	\begin{equation}\label{eq:frobsub3}
			(x_1y)_3z = y_3x_1z \text{ for all } x,y,z \in G(V).
	\end{equation}
	We will use this to show that $(a_1b)_3c$ is invariant under permutations of $a$, $b$ and $c$, for any $a$, $b$ and $c$ in $G(V)$.	On the one hand, we already know that $(a_1b)_3c = c_3a_1b = (a_1c)_3b$. On the other hand, we have
	\begin{align*}
		(a_1b)_3c &= b_3a_1c &(\text{\cref{eq:frobsub3}})\\
		&= b_3c_1a - b_3\L{-1} c_2a &(\text{\cref{lem:formulas}(i)})\\
		&= b_3c_1a + [\L{-1},b_3] c_2a &(\text{(VA3) and (VOA4)}) \\
		&= b_3c_1a -3b_2c_2a &(\text{\cref{lem:formulas}(iv)}) \\
		&= b_3c_1a &(\text{\cref{eq:frobsubforgot}}).
	\end{align*}
	So the expression $(a_1b)_3c$ is invariant under permutations of $a$, $b$, and $c$. We can use this to prove the claim as follows:
		\begin{equation*}
		2(a\times_0b)_3c = (a_1b)_3c + (b_1a)_3c = (b_1c)_3a +(c_1b)_3a  = 2(b\times_0c)_3a. \qedhere
	\end{equation*}
\end{proof}

We end this section with the observation that gave the idea for this work.
\begin{proposition}\label{prop:CGisgriess}
	Chayet--Garibaldi algebras are generalized Griess algebras.
\end{proposition}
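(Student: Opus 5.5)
The plan is to make the statement precise as follows. For a simple Lie algebra $\mathfrak g$ (with a normalized invariant form), take $V = L_{\hat{\mathfrak g}}(1,0)$, the simple affine VOA at level one. We show that $G(V)$ is isomorphic, as an algebra with bilinear form, to the Chayet--Garibaldi algebra $A(\mathfrak g)$ of \cite{CG21}, possibly after rescaling the product. The construction in \cite{DMO} already exhibits $A(\mathfrak g)$ inside such a VOA, so the work is to identify that construction with $(\operatorname{ker}\L1|_{V_2},\times_0)$.

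\textbf{Step 1: structure of $V_1$, $V_2$ and $G(V)$.}
\begin{itemize}
\item We have $V_1 = \{x_{-1}\vacuum : x\in\mathfrak g\}\cong \mathfrak g$. The Sugawara vector makes $V$ a VOA of strong CFT type; (CFT1) holds because $\L1 x_{-1}\vacuum = 0$.
\item $V_2$ is spanned by the vectors $x_{-2}\vacuum = \L{-1}x_{-1}\vacuum$ and $x_{-1}y_{-1}\vacuum$.
\item In the Weyl module, weight $2$ is $\mathfrak g\oplus S^2\mathfrak g$, since the commutators $[x_{-1},y_{-1}] = [x,y]_{-2}$ land in $\L{-1}V_1$.
\item For the simple level-one quotient, the maximal submodule in weight $2$ is generated by the singular vector $(e_\theta)_{-1}^2\vacuum$. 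Its $\mathfrak g$-span is the irreducible module of highest weight $2\theta$ inside $S^2\mathfrak g$.
\end{itemize}
Together with \cref{eq:decomposition-V2}, this gives
\[
G(V)\cong V_2/\L{-1}V_1 \cong S^2\mathfrak g / V(2\theta).
\]
This is exactly the underlying space of $A(\mathfrak g)$; for $E_8$ the count is $30876-27000 = 3876$. Concretely, the projection of $x_{-1}y_{-1}\vacuum$ to $\operatorname{ker}\L1$ along $\L{-1}V_1$ is $x_{-1}y_{-1}\vacuum - \tfrac14\L{-1}\L1 x_{-1}y_{-1}\vacuum$. Here $\L1x_{-1}y_{-1}\vacuum$ is a multiple of $[x,y]_{-1}\vacuum$, so the correction vanishes on symmetrized elements. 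Hence the symmetric tensors $x\cdot y \mapsto \tfrac12(x_{-1}y_{-1}+y_{-1}x_{-1})\vacuum$ land directly in $G(V)$.

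\textbf{Step 2: compute the product and the form.}
We compute $(x_{-1}y_{-1}\vacuum)_1(z_{-1}w_{-1}\vacuum)$ using the iterate formula (\cref{lem:formulas}(ii)), which expresses $(x_{-1}y_{-1}\vacuum)_n$ through normally ordered products of the currents $x_m,y_m$. We then apply the affine commutation relations
\[
[x_m,y_n] = [x,y]_{m+n} + m\,\delta_{m+n,0}\langle x,y\rangle.
\]
The result is a sum of terms of the following shapes, projected modulo $V(2\theta)$:
\begin{itemize}
\item $\langle x,z\rangle\, y\cdot w$;
\item $[x,z]\cdot[y,w]$;
\item terms involving $\langle x,[y,[z,w]]\rangle$-type contractions, which land in the trivial summand or in $\L{-1}V_1$.
\end{itemize}
We then symmetrize to obtain $\times_0$. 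Similarly, $(a,b)\vacuum = a_3b$ evaluates to $\langle x,z\rangle\langle y,w\rangle + \langle x,w\rangle\langle y,z\rangle$ up to a scalar.

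\textbf{Step 3: compare with $A(\mathfrak g)$.}
We compare the resulting formula with the defining product of \cite{CG21}, which is given on $S^2\mathfrak g$ by the same kinds of terms built from the Killing form and brackets, projected to the complement of $V(2\theta)$. We match the structure constants, rescaling by a nonzero scalar if necessary; this is harmless because the Killing form and the level-one normalized form differ by the dual Coxeter number. Existing results of \cite{DMO} can be quoted for the explicit identification.

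\textbf{Main obstacle.}
The hard part is Step 2, together with checking that the projection modulo $V(2\theta)$ in the VOA agrees with the one in \cite{CG21}. In the simple quotient, expressions like $x_{-1}y_{-1}\vacuum$ satisfy nontrivial relations, so the comparison must be done after projecting to $S^2\mathfrak g/V(2\theta)$ on both sides. I would first verify agreement on a highest-weight vector and on the trivial summand (the Sugawara vector should map to a multiple of the identity of $A(\mathfrak g)$). I would then use $\mathfrak g$-equivariance of both products: the zero modes $x_0$ act by derivations of $\times_0$ by the commutator formula. Since the multiplicity spaces are small, this should pin down the product up to a few scalars, which can then be checked on explicit elements.
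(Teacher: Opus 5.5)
Your Step~1 is exactly the observation the paper's proof consists of. Since $\L1x_{-1}y_{-1}\vacuum=x_0y_{-1}\vacuum=[x,y]_{-1}\vacuum$, the symmetrized vectors (the space $L^{\mathrm{sym}}_{(2)}$ of \cite{DMO}) lie in $\ker\L1|_{V_2}$. Since $x_{-1}y_{-1}\vacuum-y_{-1}x_{-1}\vacuum=[x,y]_{-2}\vacuum\in\L{-1}V_1$, they span $\ker\L1|_{V_2}$ by \cref{eq:decomposition-V2}. The paper then simply cites \cite[Theorem~5.23]{DMO}, which identifies the Chayet--Garibaldi algebra with $L^{\mathrm{sym}}_{(2)}$. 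So your Steps~2--3 are an optional re-derivation rather than a needed step. For matching products it helps to note that $a\times_0b=a_1b-\tfrac12\L{-1}(a_2b)$ is precisely the $\ker\L1$-component of $a_1b$.

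If you do carry out Step~2 yourself, one claim is false and would break the comparison: $a_3b$ is not a multiple of $\langle x,z\rangle\langle y,w\rangle+\langle x,w\rangle\langle y,z\rangle$. For $a=x_{-1}y_{-1}\vacuum$ and $b=z_{-1}w_{-1}\vacuum$, the iterate formula gives $a_3b=y_2x_0b+y_1x_1b$, hence
\[
(a,b)=\langle x,z\rangle\langle y,w\rangle+\langle x,w\rangle\langle y,z\rangle-2\langle [x,z],[y,w]\rangle+\langle [y,z],[x,w]\rangle .
\]
The bracket terms are exactly what puts $V(2\theta)$ into the radical. In $\mathfrak{sl}_2$ one gets $(e_{-1}e_{-1}\vacuum,f_{-1}f_{-1}\vacuum)=2-\langle h,h\rangle=0$, as it must be since $e_{-1}^2\vacuum=0$ in $L_{\widehat{\mathfrak{sl}}_2}(1,0)$; your formula gives $2$.

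So the form you wrote does not even descend to $S^2\mathfrak g/V(2\theta)$. On the complement of $V(2\theta)$, the true form is not a constant multiple of the induced form either. The correction is governed by the split Casimir $\sum_i\operatorname{ad}u_i\otimes\operatorname{ad}u^i$, which acts by different scalars on the different summands of $S^2\mathfrak g$. For $E_8$, the resulting scaling factors on the trivial summand and on the $3875$ are in ratio $31:7$.

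Likewise, $a_1b$ contains genuine double-bracket terms such as $x_{-1}[[y,z],w]_{-1}\vacuum$, coming from $x_{-1}y_1b$. These are neither scalar contractions nor elements of $\L{-1}V_1$, so your list of term shapes is incomplete. Relying on \cite[Theorem~5.23]{DMO}, as the paper does, sidesteps this computation entirely.
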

\begin{proof}
	This follows from \cite[Theorem~5.23]{DMO}, by noting that the subspace $L^{\mathrm{sym}}_{(2)}\leq L_{\hat{g}}(1,0)$ is precisely the kernel of $\L1$ restricted to the weight~$2$ component $V_2$ of $V$, where the conformal vector is the usual one (see \cite[Equation (6.2.42)]{LL04}).
\end{proof}

\section{Ising vectors in generalized Griess algebras}

In OZ VOAs, Ising vectors $e$ yield idempotents $\frac{1}{2}e$ of the corresponding Griess algebra. Miyamoto proved in \cite{M96} that these idempotents satisfy the Monster fusion law. We prove an analogue for generalized Griess algebras $G(V)$. First, we prove that any Ising vector $\iv$ of $V$ is contained in $G(V)$ (\cref{lemma:42}). Next, we compute the eigenspaces corresponding to left multiplication by the Ising vector (\cref{lemma:eigenspaces}). Finally, we use the fusion rules of the Virasoro VOA to compute the fusion law of the Ising vector (\cref{thm:fusion_law,prop:fuslawBn}).

Recall that we work over a field $\mathbb{F}$ of characteristic not $2$ or $7$. We will also assume the characteristic is not $3$ or $31$ to avoid some technicalities. We outline how to deal with these cases in \cref{rem:kar31}.  Throughout this section, $(V,Y,\vacuum)$ is a vertex operator algebra of strong CFT type and $\iv$ an Ising vector of $V$. 

Recall that $\mathrm{Vir}(\iv)$ is isomorphic to the irreducible Virasoro VOA of central charge $\frac{1}{2}$. In particular, all its modules are completely reducible, and its irreducible modules are known: they are all generated by a lowest weight vector of weight $0$, $\frac12$ or $\frac1{16}$ (\cite[Theorem~4.9]{DR16}). 

Let $v\in V$ be a lowest weight vector of $\mathrm{Vir}(\iv)$ with $\iv_1v=hv$ and $\iv_nv=0$ for $n>1$. Write $v= v_0 + \dots +v_k$ with $v_i\in V_i$ the homogeneous components of $v$. Since $e_n$ shifts the grading of $V$ by $n-1$ by (VOA4), it is easy to see that each $v_i$ is also a lowest weight vector for $\mathrm{Vir}(\iv)$.

Hence, the completely reducible $\mathrm{Vir}(\iv)$-module $V$ can be decomposed into \emph{homogeneous} irreducible modules. That is, it can be decomposed into modules isomorphic to $L(\frac{1}{2},h)$ whose lowest weight vector is homogeneous with respect to the grading $V=\bigoplus_{n\in \mathbb{N}}V_n$.

In particular, we have the following: $V_0=\mathbb{F}\vacuum$ is spanned by a lowest weight vector for $\mathrm{Vir}(\iv)$. Since $e_0\vacuum =0$ and $V$ is a direct sum of homogeneous $\mathrm{Vir}(\iv)$-modules, $V_1$ is necessarily also spanned by lowest weight vectors.  Hence we obtain the decompositions
\begin{align}\label{decompositio}
V_1 &= \bigoplus_{h=0,\frac{1}{2},\frac{1}{16}} T_{h,1},\\ V_2 &= \mathbb{F}\iv \oplus \iv_0(V_1)\oplus \bigoplus_{h=0,\frac{1}{2},\frac{1}{16}} T_{h,2} ,\label{decomposition}
\end{align}
where 
\[T_{h,n} \coloneqq \{v \in V_n\mid  \iv_{1}v = hv\text{, and } \iv_iv = 0 \text{ for } i\geq 2\}.\]

We can now prove the announced lemma.
\begin{lemma}\label{lemma:42} 
Any Ising vector $\iv\in V$ is contained in $\ker \L1$: 
\[\L1 \iv  = 0.\]	
\end{lemma}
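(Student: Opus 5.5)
The plan is to show that $x\coloneqq \L1\iv\in V_1$ is an eigenvector of $\iv_1$ with eigenvalue $1$. The decomposition \eqref{decompositio} of $V_1$ into the spaces $T_{h,1}$, $h\in\{0,\frac12,\frac1{16}\}$, then forces $x=0$, since $1$ is not among these eigenvalues. Throughout, the main tools are skew-symmetry and the commutator formula from \cref{lem:formulas}, together with the grading axiom (VOA4) to discard terms landing in $V_0$ or $V_{-1}$.

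\textbf{Step 1.} Apply the commutator formula (\cref{lem:formulas}(iii)) to $[\L1,\iv_1]=[\ccv_2,\iv_1]$:
\[
[\ccv_2,\iv_1]=(\ccv_0\iv)_3+2(\ccv_1\iv)_2+(\ccv_2\iv)_1 .
\]
Using $\ccv_0=\L{-1}$, the $\mathcal D$-derivative property (\cref{lem:formulas}(iv)) and $\ccv_1\iv=2\iv$, this should simplify to
\[
[\L1,\iv_1]=-3\iv_2+4\iv_2+x_1=\iv_2+x_1 .
\]

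\textbf{Step 2.} Evaluate this identity on $\iv$. Since $\iv$ generates a copy of $L(\frac12,0)$, we have $\iv_1\iv=2\iv$ and $\iv_2\iv=0$ (the latter is $L^{\iv}_1$ applied to the conformal vector of $L(\frac12,0)$). The left-hand side gives $\L1\iv_1\iv-\iv_1\L1\iv=2x-\iv_1x$. The right-hand side gives $\iv_2\iv+x_1\iv=x_1\iv$.

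\textbf{Step 3.} Compute $x_1\iv$ by skew-symmetry (\cref{lem:formulas}(i)):
\[
x_1\iv=\iv_1x-\mathcal D(\iv_2x)+\D2(\iv_3 x)-\cdots .
\]
By (VOA4), $\iv_2x\in V_0=\mathbb F\vacuum$, which is killed by $\mathcal D=\L{-1}$. All higher terms vanish because they lie in negative degree. Hence $x_1\iv=\iv_1x$. Combining with Step 2 yields $2x-\iv_1x=\iv_1x$, that is, $\iv_1x=x$.

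\textbf{Step 4.} Write $x=\sum_h x_h$ with $x_h\in T_{h,1}$, using \eqref{decompositio}. Then $\iv_1x=x$ gives $(h-1)x_h=0$ for each $h$. Since $\operatorname{char}\mathbb F\neq 2$, we have $\frac12\neq1$. The main obstacle is the case $h=\frac1{16}$: $\frac1{16}=1$ exactly when the characteristic divides $15$, i.e.\ characteristic $3$ (excluded in this section) or $5$. So apart from characteristic $5$, all components vanish and $x=0$.

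For characteristic $5$, I would try to use a second relation. Expanding $\ccv=\L{-1}\iv$-free parts via \eqref{decomposition} and using skew-symmetry $x=\ccv_2\iv=\iv_2\ccv$, one gets $x=2\iv_1u$ for some $u\in V_1$. I would then aim to combine this with the invariance of the pairing $\iv_2 x\in V_0$, or alternatively invoke \cref{rem:kar31}-type special handling. I expect this small-characteristic bookkeeping, not the main computation, to be the only delicate point.
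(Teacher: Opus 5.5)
Away from characteristic $5$, your argument is complete and coincides with the paper's proof. The paper uses the same commutator computation $[\L1,\iv_1]=\iv_2+(\L1\iv)_1$, evaluates it on $\iv$, and applies skew-symmetry to get $\iv_1\L1\iv=\L1\iv$; it then concludes from $V_1=\bigoplus_h T_{h,1}$. One small slip: by Lemma~\ref{L1V10}(ii), $\ccv_2\iv=-\iv_2\ccv$, so your $x=2\iv_1u$ should read $x=-2\iv_1u$.

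Your characteristic-$5$ worry is correct, and the paper's proof has the same hole. Its standing assumptions exclude $2,3,7,31$, but $16\equiv 1\pmod 5$, so $\frac1{16}=1$, and the final step only shows $\L1\iv\in T_{\frac1{16},1}$. The patch you sketch does not close this. Since $\iv_1$ acts invertibly on $T_{\frac1{16},1}$, knowing $x\in\iv_1(V_1)$ gives no information, and $\iv_2x=0$ holds already.

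What you can actually extract, from the iterate formula together with $\iv_2V_1=0$, is $x_1y=-\iv_2y=0$ for all $y\in V_1$. So $x$ lies in the radical of the pairing $(a,b)\mapsto a_1b$ on $V_1$. This finishes the proof only under an extra hypothesis, such as nondegeneracy of that pairing.

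Without such a hypothesis, the statement appears to be false in characteristic $5$. Suppose $h\in V_1$ satisfies $h_0=0$ on $V$ and $h_1V_1=0$. Then $\ccv+\L{-1}h$ is again a conformal vector of strong CFT type with the same grading, with modes $\L n-(n+1)h_n$. Its new $\mathsf{L}_1$ therefore sends $\iv$ to $\L1\iv-2h_1\iv=\L1\iv-2\iv_1h$. For a lowest weight vector $h\in T_{\frac1{16},1}$ in a VOA with $\ccv=\iv$, skew-symmetry gives $h_0\iv=(\frac1{16}-1)\iv_0h$. This vanishes in characteristic $5$, and then $h_0$ commutes with all $\iv_n$. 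So a square-zero extension $L(\frac12,0)\oplus L(\frac12,\frac1{16})$, with the lowest weight vector $h$ placed in degree $1$, would, after the shift, give an Ising vector with $\L1\iv=-\frac18h\neq 0$. This is conditional on constructing that extension over $\mathbb{F}$, which I have not checked.

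The right repair is therefore to exclude characteristic $5$ as well, rather than to add more bookkeeping. Doing so also removes the collision of the eigenvalues $1$ and $\frac1{16}$ of $R_\iv$ in \cref{lemma:eigenspaces}.
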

\begin{proof}
	By the commutator formula, we have 
	\begin{align*}
		[\L1,\iv_1] \iv&= (\ccv_0 \iv) _3\iv + 2(\ccv_1\iv)_2\iv + (\ccv_2\iv)_1\iv \\
		&= (\L{-1}\iv)_3\iv + 2(\L0\iv)_2\iv + (\L1\iv)_1\iv  \\ 
		&= -3\iv_2\iv+4\iv_2\iv + (\L1\iv)_1\iv.
	\end{align*}
	In the last equality, we used \cref{lem:formulas}(iv) for the first term and (VOA2) for the second. Now $\iv_2\iv=0$ by \cref{L1V10}(ii), which gives $[\L1,\iv_1] \iv= (\L1\iv)_1\iv = \iv_1\L1\iv -\mathcal{D}(\iv_2\L1\iv ) = \iv_1\L1\iv$, since $\iv_2\L1\iv \in V_0$ and $\mathcal{D}(V_0)=0$.
	We now compute the image of $\L1\iv$ under the map $\iv_1$:
	\begin{align*}
		\iv_1\L1\iv &= [\iv_1,\L1]\iv+\L1\iv_1\iv \\
		&= - \iv_1\L1\iv + 2\L1\iv = \L1\iv. 
	\end{align*}
	This shows that $\L1\iv\in V_1$ is an eigenvector with eigenvalue $1$ for $\iv_1$. However, the space $V_{1}= \bigoplus_{h\in \{0,\frac12,\frac1{16}\}} T_{h,1}$ is spanned by eigenvectors with eigenvalues $0$, $\frac12$ and $\frac1{16}$. Hence $\L1\iv$ has to be zero.
\end{proof}

\subsection{Eigenspaces}
We use the decompositions \eqref{decompositio} and \eqref{decomposition} to compute the eigenspaces of the operator
\[R_\iv: V_2 \to V_2 \colon v  \mapsto \iv\times_0 v.\] This operator can also be written as \[R_\iv = \iv_1 - \frac12\mathcal{D}\iv_2.\] 
 \begin{lemma}\label{lemma:eigenspaces}
	The operator $R_\iv$ is semisimple and has the following eigenspaces in $V_2$:
	\[\begin{array}{c|c}
			\text{eigenvalue} & \text{eigenspace} \\\hline
			0 & T_{0,2}\\
			\frac1{16} & T_{\frac1{16},2}\\
			\frac12 & T_{\frac12,2}\\
			1 & S \coloneqq  \{ v_0\iv \mid v\in V_1\}\\
			2 & \langle \iv \rangle.\\
	\end{array}\]
\end{lemma}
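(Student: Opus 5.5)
The plan is to compute $R_\iv = \iv_1 - \frac12\mathcal D\iv_2$ summand by summand on the decomposition \eqref{decomposition}
\[V_2 = \mathbb{F}\iv \oplus \iv_0(V_1)\oplus \bigoplus_h T_{h,2}.\]
I then replace the summand $\iv_0(V_1)$ by the space $S$, and conclude by a dimension count. The final step needs the five values $0,\frac1{16},\frac12,1,2$ to be pairwise distinct in $\mathbb{F}$. This is where the standing assumption $\operatorname{char}\mathbb{F}\neq 2,3,7,31$ enters. Note that $1=\frac1{16}$ in characteristic $5$ (and in characteristic $3$), so I will record explicitly which characteristics are excluded.

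\textbf{Easy summands.} For $t\in T_{h,2}$ we have $\iv_2t=0$ and $\iv_1t=ht$, so $R_\iv t = ht$. For $\iv$ itself, $\iv_1\iv = 2\iv$ because $\iv_1$ is the $\L0$ of $\mathrm{Vir}(\iv)$ and $\iv$ is its conformal vector. Also $\iv_2\iv=0$ by \cref{L1V10}(ii). Hence $R_\iv\iv = 2\iv$.

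\textbf{The summand coming from $V_1$.} Fix $v\in T_{h,1}$. I will use:
\begin{itemize}
\item the Virasoro relations for $\iv_{n+1}$, namely $\iv_1\iv_0v=(1+h)\iv_0v$ and $\iv_2\iv_0v = 2\iv_1 v = 2hv$;
\item the $\mathcal D$-derivative property (\cref{lem:formulas}(iv)), which gives $\iv_1\L{-1}v = h\L{-1}v + \iv_0v$ and $\iv_2\L{-1}v = 2hv$.
\end{itemize}
These yield
\[R_\iv(\iv_0 v) = (1+h)\iv_0v - h\L{-1}v,\qquad R_\iv(\L{-1}v) = \iv_0 v.\]
By skew-symmetry (\cref{lem:formulas}(i)), with the $\D i$-terms for $i\ge 2$ vanishing since $\iv_2 v\in V_0$ and $\iv_2v=0$, we get
\[v_0\iv = -\iv_0v + h\L{-1}v.\]
The two displayed formulas then give $R_\iv(v_0\iv)=v_0\iv$, so $S$ lies in the $1$-eigenspace.

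\textbf{Dimension count and semisimplicity.} For $h=0$, the vector $v$ generates a copy of $L(\frac12,0)$, in which $\L{-1}$ of the lowest weight vector vanishes. Hence $\iv_0v=0$, and also $v_0\iv=0$. For $h\in\{\frac12,\frac1{16}\}$, I will show that $v\mapsto v_0\iv$ is injective. Suppose $-\iv_0v+h\L{-1}v=0$. Applying $\iv_1$ gives $-\iv_0v+h^2\L{-1}v=0$. Subtracting, $(h^2-h)\L{-1}v=0$, so $\L{-1}v=0$ and then $\iv_0v=0$. This contradicts the fact that $\L{-1}$ of a lowest weight vector of $L(\frac12,h)$, $h\neq0$, is nonzero. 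Since $v\mapsto \iv_0v$ also kills $T_{0,1}$ and is injective on $T_{\frac12,1}\oplus T_{\frac1{16},1}$, we get
\[\dim S = \dim T_{\frac12,1}+\dim T_{\frac1{16},1} = \dim \iv_0(V_1).\]
Eigenvectors for the distinct eigenvalues $0,\frac1{16},\frac12,1,2$ are linearly independent. Therefore
\[\mathbb{F}\iv\oplus S\oplus\bigoplus_h T_{h,2}\]
is a direct sum of dimension $\dim V_2$, hence equal to $V_2$. This proves that $R_\iv$ is semisimple with exactly the listed eigenspaces.

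\textbf{Main obstacle.} The delicate step is this last one: establishing the injectivity of $v\mapsto v_0\iv$ and the vanishing for $h=0$, both of which rest on the structure of $L(\frac12,h)$. It is also where one must track exactly which characteristics make eigenvalues collide.
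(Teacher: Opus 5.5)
Your computations are correct, and your route differs from the paper's mainly in how you show that the listed spaces exhaust $V_2$. The paper gets the eigenvalue $1$ for all $u\in V_1$ at once from $R_\iv(u_0\iv)=\frac12(\iv_1u_0\iv+(u_0\iv)_1\iv)$ and $(u_0\iv)_1\iv=u_0\iv_1\iv-\iv_1u_0\iv$, without splitting $V_1$. For spanning, it uses $\iv_2\colon V_2\to V_1$ as a test map: $\iv_2$ kills $\mathbb{F}\iv\oplus\bigoplus_hT_{h,2}$. The identities $\iv_2\iv_0v=2hv$ and $\iv_2(v_0\iv)=\pm2(h-h^2)v$ then show that $\iv_0(V_1)$ and $S$ both map isomorphically onto $T_{\frac12,1}\oplus T_{\frac1{16},1}$, so $S$ can replace $\iv_0(V_1)$ in \eqref{decomposition}. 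This gives the direct sum under the weaker requirement $h\neq0,1$ rather than pairwise distinctness of all five eigenvalues; for instance, it still works in characteristic $31$. Your $2\times2$ analysis of $R_\iv$ on $\langle\iv_0v,\mathcal{D}v\rangle$, whose characteristic polynomial is $(\lambda-1)(\lambda-h)$, shows more transparently where the eigenvalue $1$ comes from. The price is that your final dimension count needs all five eigenvalues to be distinct.

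One step needs an extra line. You prove injectivity of $v\mapsto v_0\iv$ only on each $T_{h,1}$ separately, but $\dim S=\dim T_{\frac12,1}+\dim T_{\frac1{16},1}$ needs it on $T_{\frac12,1}\oplus T_{\frac1{16},1}$. Your own subtraction trick handles this. If $\sum_h(v_h)_0\iv=0$ with $v_h\in T_{h,1}$, you get $\mathcal{D}\bigl(\sum_h(h-h^2)v_h\bigr)=0$. Since $\L1\L{-1}=2$ on $V_1$ by (CFT1), $\mathcal{D}=\L{-1}$ is injective there, and each $v_h=0$. Alternatively, apply $\iv_2$ as the paper does; this also settles injectivity of $\iv_0$ on the sum.

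Finally, your characteristic-$5$ caveat is correct, and the paper overlooks it: its standing hypotheses exclude only $2,3,7,31$. In characteristic $5$ one has $\frac1{16}=1$, so the paper's factor $2(h-h^2)$ vanishes for $h=\frac1{16}$. Your formulas show more: for $h=1$, $R_\iv$ has the single eigenvalue $1$ on $\langle\iv_0v,\mathcal{D}v\rangle$, yet $R_\iv(\mathcal{D}v)=\iv_0v$. Hence it acts as a nontrivial Jordan block whenever $\mathcal{D}v\neq\iv_0v$. So excluding characteristic $5$ matters for semisimplicity itself, not only for labelling the eigenspaces.
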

\begin{proof}
	First, we prove that the spaces listed are eigenspaces and then that they span $V_2$.
	\paragraph{\underline{Eigenvalues $0$, $\frac12$, and $\frac1{16}$}}
	Let $v$ be a vector in $T_{h,2}$ with $h\in \{0,\frac{1}{2},\frac{1}{16}\}$. Then we have $\iv_2v=0$. We compute that \[R_\iv v = \iv_1 v - \frac12 \mathcal{D}(0) = hv.\]
	\paragraph{\underline{Eigenvalue $2$}} We have $\iv_2\iv=0$ and $\iv_1 \iv = 2\iv$. This implies that $R_\iv v = 2\iv$.
	\paragraph{\underline{Eigenvalue $1$}} Let $v$ be a vector in $V_1$. By skew-symmetry (\cref{lem:formulas}(i)), we have 
	\[R_\iv (u_0\iv) = \frac{1}{2}(\iv_1u_0\iv + (u_0\iv)_1\iv).\]
	We compute further,  using the iterate formula (\cref{lem:formulas}(ii)), that
	\begin{align*}
		R_\iv (u_0\iv) &= \frac{1}{2}(\iv_1u_0\iv + (u_0\iv)_1\iv) \\
		&= \frac{1}{2}(\iv_1u_0\iv + u_0\iv_1\iv - \iv_1u_0\iv) = u_0\iv,
	\end{align*}
	where in the last equality, we used the fact that $\iv_1\iv = 2\iv$.\medskip
  
 Next we prove that $V_2= \langle \iv \rangle \oplus S \oplus T_{0,2}\oplus T_{\frac12,2}\oplus T_{\frac{1}{16},2}$. By decomposition \eqref{decomposition}, it suffices to show that the kernel of the map $\iv_2\colon V_2 \to V_1$ contains $\mathbb{F}\iv\oplus_h T_{h,2}$, while both $S$ and $\iv_0(V_1)$ map isomorphically onto their image.  The first claim follows from the fact that $\iv_2\iv = \iv_2v=0$ for any lowest weight vector $v$. For the second claim, first observe that $S$ decomposes as a sum \begin{equation}\label{eq:S}S_0 + S_{\frac12} + S_{\frac1{16}}\end{equation} with $S_h=\{v_0\iv\mid v\in T_{h,1}\}$. 
  
  We prove that the image of $S_h$ under the map $\iv_2$ equals the image of $\iv_0T_{h,1}$. If $h=0$, then we have $\iv_1T_{0,1}=0$ and $\iv_0T_{0,1}=0$ by \cref{thm:virvoa}. Hence, $\iv_0T_{0,1} =0$, and by skew-symmetry (\cref{lem:formulas}(i)) also $S_0=0$.
  
  If $h\not=0$, then we prove $\iv_2$ defines isomorphisms from $\iv_0T_{h,1}$, and $S_h$ onto $T_{h,1}$. We have for $v\in T_{h,1}$ that respectively
  \begin{align*}
  	\iv_2\iv_0v &= [\iv_2,\iv_0]v \\ &= (\iv_0\iv)_2v +2(\iv_1\iv)_1v + (\iv_2\iv)_0v \\&= 2\iv_1v = 2hv,\intertext{and}
  	\iv_2v_0\iv &= \iv_2\iv_0v - \iv_2\mathcal{D}(\iv_1v) \\&= 2\iv_1v - [\iv_2,\mathcal{D}]\iv_1v \\&= 2\iv_1v -2\iv_1\iv_1v=2(h-h^2)v.
  \end{align*}
This proves our claim that $\iv_2$ maps both $S$ and $\iv_0(V_1)$ isomorphically onto $\oplus_{h\in\{\frac{1}{2},\frac{1}{16}\}} V_{h,1}$. From this it follows that $V_2 = S\oplus \mathbb{F}\iv \oplus \bigoplus_{h} T_{h,2}$, proving the claim.  
\end{proof}
\begin{remark}\label{rem:e2inj}
	In the course of the proof, we showed that $e_2$ maps $S$ injectively into $V_1$. We will use this fact again in \cref{lem:1/2}.
\end{remark}
These eigenspaces restrict nicely to the generalized Griess algebra $G(V)$.

\begin{lemma}
	The operator $R_\iv$ is diagonalizable on $G(V)=\ker(\L1|_{V_2})$ with spectrum a subset of $\{0,\frac1{16},\frac12,1,2\}$.
\end{lemma}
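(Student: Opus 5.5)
The plan is to show that $G(V)=\ker(\L1|_{V_2})$ is invariant under $R_\iv$, and that it is spanned by its intersections with the eigenspaces of \cref{lemma:eigenspaces}. Once this holds, diagonalizability and the bound on the spectrum follow at once from \cref{lemma:eigenspaces}.

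\emph{Invariance.} By \cref{lemma:42} we have $\iv\in G(V)$. By \cref{lemma:invariant-subspace}, $\times_0$ is internal on $G(V)$, so $R_\iv$ maps $G(V)$ into itself.

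\emph{Diagonalizability.} By \cref{lemma:eigenspaces}, $R_\iv$ is semisimple on $V_2$ with eigenvalues in $\{0,\frac1{16},\frac12,1,2\}$. Hence its minimal polynomial on $V_2$ divides the polynomial
\[
p(x)=x\left(x-\tfrac1{16}\right)\left(x-\tfrac12\right)(x-1)(x-2),
\]
which has distinct roots. This is where we need $\operatorname{char}\mathbb{F}\neq 2,3,7,31$: the differences of the roots, such as $\frac1{16}-\frac12=-\frac7{16}$, $2-\frac1{16}=\frac{31}{16}$ and $1-\frac1{16}=\frac{15}{16}$, must be nonzero. The restriction of $R_\iv$ to the invariant subspace $G(V)$ is annihilated by the same $p$, so it is diagonalizable, and every eigenvalue is a root of $p$. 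Concretely, the Lagrange interpolation projections $\prod_{\mu\neq\lambda}(R_\iv-\mu)/(\lambda-\mu)$ are polynomials in $R_\iv$. They therefore preserve $G(V)$ and decompose each $a\in G(V)$ into eigencomponents lying in $G(V)$.

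\emph{Main obstacle.} The only delicate point is the invariance of $G(V)$ under $R_\iv$, and in particular that $\iv\in G(V)$. Both are already supplied by \cref{lemma:42} and \cref{lemma:invariant-subspace}, and the rest is linear algebra. As an optional check, one could also identify the eigenspaces within $G(V)$ explicitly. For example, the vectors in $T_{h,2}$ are annihilated by $\L1$ whenever they are lowest weight vectors for the full Virasoro action, but the argument does not need this.
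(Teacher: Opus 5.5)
Your proposal is correct and follows the paper's route: invariance of $G(V)$ under $R_\iv$ comes from \cref{lemma:42} and \cref{lemma:invariant-subspace}, and the paper then restricts the semisimple operator of \cref{lemma:eigenspaces} to this stable subspace. Your minimal-polynomial and Lagrange-projection argument simply spells out why such a restriction is semisimple.

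One small slip: your claim that $p$ has distinct roots when $\operatorname{char}\mathbb{F}\neq 2,3,7,31$ fails in characteristic $5$, which the paper allows. There $16=1$, so $\frac1{16}=1$, and your own listed difference $\frac{15}{16}$ vanishes. This is harmless, because you only need a squarefree polynomial annihilating $R_\iv$ on $V_2$. The minimal polynomial of $R_\iv|_{V_2}$ is squarefree by semisimplicity, so use it in place of $p$; no characteristic assumption is needed at this step.
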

\begin{proof}
By \cref{lemma:invariant-subspace,lemma:42}, the space $G(V)$ is stable under multiplication by $\iv$:
\[ R_\iv \ker(\L1) \leq \ker(\L1).\]
 The map $R_\iv$ is semisimple on $V_2$ by the previous lemma. Hence $R_\iv|_{G(V)}$ is semisimple as the restriction of a semisimple map to a stable subspace. The spectrum is then a subset of $\{0,\frac1{16},\frac12,1,2\}$.
\end{proof}
Not only that, but it turns out that the $1$-eigenspace is fully contained in $G(V)$, and can be related to derivations of $G(V)$.
\begin{lemma}\label{lem:derivations}
	For any $u\in V_1$, the operator $u_0\colon V_2 \to V_2$ restricts to a derivation on $G(V)$.  In particular, $S\subseteq G(V)$, and the $1$-eigenspace of $R_\iv$ is equal to $S= \langle D(\iv) \mid D\in \mathrm{Der}(G(V))\rangle$.
\end{lemma}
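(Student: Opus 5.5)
We need three things: that $u_0$ preserves $G(V)$, that it is a derivation of $\times_0$, and the description of the $1$-eigenspace.

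\emph{Step 1: $u_0$ preserves $V_2$ and $\ker\L1$.} By (VOA4), $u_0$ preserves each $V_n$ for $u\in V_1$. By \cref{L1V10}(iii) with $n=1$, $k=0$ (applicable since $\L1u=0$ by (CFT1)), we get $[\L1,u_0]=(2-0-2)u_1=0$. Hence $u_0$ commutes with $\L1$ and maps $G(V)=\ker\L1|_{V_2}$ into itself.

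\emph{Step 2: derivation property.} The commutator formula (\cref{lem:formulas}(iii)) with $m=0$ gives $[u_0,a_n]=(u_0a)_n$ for all $a\in V$ and all $n$. Applying this with $n=1$ to $b\in G(V)$ yields $u_0(a_1b)=(u_0a)_1b+a_1(u_0b)$. Symmetrizing in $a,b$ gives
\[u_0(a\times_0 b)=(u_0a)\times_0 b+a\times_0(u_0b).\]
So $u_0$ restricted to $G(V)$ is a derivation. No degenerate terms from $\mathcal D$ arise, since on $V_2$ the product is simply $\frac12(a_1b+b_1a)$.

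\emph{Step 3: the $1$-eigenspace.} By \cref{lemma:42}, $\iv\in G(V)$, so Step 1 gives $S=\{v_0\iv\mid v\in V_1\}\subseteq G(V)$. By \cref{lemma:eigenspaces}, $S$ is exactly the $1$-eigenspace of $R_\iv$ on $V_2$, and hence also on $G(V)$. By Step 2, $S\subseteq\langle D(\iv)\mid D\in\mathrm{Der}(G(V))\rangle$.

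For the reverse inclusion, let $D$ be any derivation of $G(V)$. Applying $D$ to $\iv\times_0\iv=2\iv$ (from $\iv_1\iv=2\iv$ and $\iv_2\iv=0$) gives
\[2\,\iv\times_0 D\iv = 2D\iv,\]
so $R_\iv(D\iv)=D\iv$, and $D\iv$ lies in the $1$-eigenspace. This eigenspace is $S$ by \cref{lemma:eigenspaces} together with semisimplicity of $R_\iv$ on $G(V)$. This gives equality, and it uses $\operatorname{char}\mathbb F\neq 2$.

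The only step requiring care is checking that \cref{L1V10}(iii) really gives a vanishing commutator for $k=0$. Since $2n-k-2=0$ when $n=1$ and $k=0$, the coefficient is zero, so $u_0$ commutes with $\L1$ and Step 1 goes through. The rest is formal.
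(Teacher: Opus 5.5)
Your proof is correct and follows the paper's argument essentially step for step. You get invariance of $G(V)$ from $[\L1,u_0]=0$ (you cite \cref{L1V10}(iii), while the paper expands the same commutator directly), the Leibniz rule from $[u_0,a_1]=(u_0a)_1$, and $D(\iv)\in S$ by applying $D$ to $\iv\times_0\iv=2\iv$ and invoking \cref{lemma:eigenspaces}; you also state that last step for an arbitrary derivation $D$, which is what the paper intends even though its display writes $u_0$.
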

\begin{proof}
	Let $u\in V_1$ be arbitrary. For any $b\in \ker \L1$, we have
	\begin{align*}
		\L1 u_0 b &= [\L1,u_0]b\\
		&= (\L{-1}u)_2b+ 2(\L0u)_1b +(\L1u)_0b \\
		&= -2u_1b +2u_1b +0 =0. 
	\end{align*}
	This shows that $u_0$ restricts to an operator on $G(V)$. We can also verify that it satisfies the Leibniz rule: 
	\begin{multline*}
		u_0(a\times_0b) = \frac{1}{2}u_0(a_1b + b_1a) = \frac{1}{2}([u_0,a_1]b + a_1 (u_0b) + [u_0,b_1]a + b_1(u_0a) \\
		=\frac{1}{2}((u_0a)_1b + a_1 (u_0b) + (u_0b)_1a + b_1(u_0a)) = u_0a \times_0 b + a \times_0 u_0b.
	\end{multline*} 
	Hence, we have $u_0\in \mathrm{Der}(G(V))$. The fact that $S\subseteq G(V)$ follows, since $e\in G(V)$. For the last claim, it can be seen that for any derivation $D\in \mathrm{Der}(G(V))$, the element $D(\iv)$ is a $1$-eigenvector of $R_e$, by the Leibniz rule:
	\[2\iv\times_0 u_0\iv = u_0(\iv \times_0 \iv) = 2u_0\iv.\qedhere\]
\end{proof}
\begin{remark}
	In a sense, the above lemma also tells us that the $1$-eigenspace is minimal, from the perspective of the derivation algebra $\mathrm{Der}(G(V))$. That is, any potential candidate for the image of $\iv$ under a derivation of $G(V)$ has to be a $1$-eigenvector, and the above shows that this exhausts the entire $1$-eigenspace.
\end{remark}
%
%
%
%

\subsection{Fusion law} We are now ready to compute the fusion law for the eigenspaces of $R_\iv$. To do so, we first introduce some notation, then we use \cref{prop:fuse} to significantly reduce the complexity of the fusion law. We end with a few lemmas that narrow down the fusion law even further.

We keep the notation from the beginning of this section: $V$ is a VOA; $\iv$ is an Ising vector; and $T_{0,2}$, $T_{\frac{1}{16},2}$, $T_{\frac{1}{2},2}$, $S$ and $\mathbb{F} \iv $ are the eigenspaces of $R_\iv$. We denote these eigenspaces as $A_0$, $A_{\frac1{16}}$, $A_{\frac1{2}}$, $A_1$, and $ A_2$ respectively to keep the notation from becoming too cumbersome. We write \[*:\operatorname{Spec}(R_\iv)\times \operatorname{Spec}(R_\iv) \to 2^{\operatorname{Spec}(R_\iv)}\] for the (minimal) fusion law of the eigenspaces: so $\lambda\in\mu *\nu$ if and only if the projection of $A_\mu\times_0A_\nu$ onto $A_\lambda$ is nonzero. 
We immediately have the following properties.
\begin{lemma}
	The operation $*$ is symmetric and we have that 
\[2 *\lambda = \{\lambda\} \quad \text{for} \quad \lambda\in\operatorname{Spec}(R_\iv)\backslash \{0\},\] and $2*0=\emptyset$.
\end{lemma}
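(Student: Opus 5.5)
Symmetry of $*$ is immediate from commutativity of $\times_0$: we have $a\times_0 b=\frac12(a_1b+b_1a)=b\times_0 a$, so $A_\mu\times_0 A_\nu = A_\nu\times_0A_\mu$ and the projections onto each $A_\lambda$ agree. For the statement about $2$, recall that $A_2=\mathbb{F}\iv$. For $v\in A_\lambda$ we have $\iv\times_0 v = R_\iv v = \lambda v$ by \cref{lemma:eigenspaces}. Hence $A_2\times_0 A_\lambda = \lambda A_\lambda$, which is contained in $A_\lambda$.

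For $\lambda\neq 0$, this product equals $A_\lambda$ itself. We need $\lambda$ to be nonzero in $\mathbb{F}$, and this is where the characteristic assumptions enter. The values $\frac1{16}$, $\frac12$, $1$ and $2$ are nonzero whenever $\operatorname{char}\mathbb{F}\neq 2$. So the projection of $A_2\times_0A_\lambda$ onto $A_\lambda$ is nonzero, while its projection onto every other eigenspace vanishes. This gives $2*\lambda=\{\lambda\}$. For $\lambda=0$ we get $A_2\times_0A_0 = 0$, so $2*0=\emptyset$.

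One caveat concerns the minimal fusion law. It is defined only over eigenvalues that actually occur, i.e.\ those with $A_\lambda\neq 0$; for instance $S$ may vanish if $V_1=0$. Both claims are vacuous or consistent when $A_\lambda=0$, so I would state the result for $\lambda\in\operatorname{Spec}(R_\iv)$, meaning eigenvalues with nonzero eigenspace.

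I would also check that the eigenvalues in \cref{lemma:eigenspaces} are pairwise distinct in $\mathbb{F}$, so that $A_\lambda$ is a genuine eigenspace and the projection is well defined. The differences between them are, up to sign, $\frac1{16}$, $\frac12$, $1$, $2$, $\frac7{16}$, $\frac{15}{16}$, $\frac{31}{16}$, $\frac12$, $\frac32$ and $1$. These are nonzero precisely because $\operatorname{char}\mathbb{F}\notin\{2,3,5,7,31\}$. Characteristic $5$ needs care here, since it makes $\frac{15}{16}$ vanish. It is presumably handled either by the remark or implicitly, so I would just cite the standing assumptions. That is the only subtle point; the rest is a direct consequence of the eigenvalue equation.
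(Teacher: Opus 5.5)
Your argument is correct and is the same as the paper's: symmetry comes from commutativity of $\times_0$, and $\iv\times_0 v=\lambda v$ for $v\in A_\lambda$ gives $2*\lambda=\{\lambda\}$ when $\lambda\neq 0$ and $A_\lambda\neq 0$, and $2*0=\emptyset$.

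Your characteristic-$5$ worry is well founded. In characteristic $5$ we have $\frac1{16}=1$, and the paper only excludes characteristics $2,3,7,31$, so neither the standing assumptions nor \cref{rem:kar31} cover it; indeed the factor $2(h-h^2)$ in the proof of \cref{lemma:eigenspaces} vanishes for $h=\frac1{16}$. This affects whether the $A_\lambda$ give a well-defined decomposition in the earlier lemma, not the argument here, which only uses $\iv\times_0 v=\lambda v$ on the labelled summands.
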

\begin{proof}
This follows directly because $\times_0$ is commutative and multiplication by $\iv$ preserves the eigenspaces.	
\end{proof}

Next, we use the representation theory of the irreducible Virasoro VOA $W$ of central charge $\frac{1}{2}$ to significantly reduce the number of option for $*$.
\begin{lemma}\label{lem:stdfusion} The fusion law $*$ satisfies the following properties:
\begin{align*}
	                  0*0 &\subseteq \left\{0,1,2\right\} &
	            \frac12*0 &\subseteq \left\{\frac12,1\right\}, &
	         \frac1{16}*0 &\subseteq \left\{\frac1{16},1\right\}, \\
	      \frac12*\frac12 &\subseteq \left\{0,1,2\right\}, &
 	   \frac12*\frac1{16} &\subseteq \left\{\frac1{16},1\right\}, &
	\frac1{16}*\frac1{16} &\subseteq \left\{0,\frac12,1,2\right\}
\end{align*}
\end{lemma}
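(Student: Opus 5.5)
We will deduce the restrictions from \cref{prop:fuse}, applied to the isotypic decomposition $V=U_0\oplus U_{\frac12}\oplus U_{\frac1{16}}$ of $V$ as a $\mathrm{Vir}(\iv)$-module. First we place each eigenspace of $R_\iv$ inside an isotypic component. By construction, $T_{h,2}\subseteq U_h$ for $h\in\{0,\frac12,\frac1{16}\}$, since it consists of lowest weight vectors of weight $h$, which generate copies of $L(\frac12,h)$. Moreover $\iv\in \mathrm{Vir}(\iv)\subseteq U_0$. For $A_1=S$ we use the decomposition \eqref{eq:S}: $S=S_{\frac12}+S_{\frac1{16}}$, because $S_0=0$. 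Here $S_h=\{v_0\iv\mid v\in T_{h,1}\}$, and by skew-symmetry $v_0\iv=\iv_0v-\mathcal{D}(\iv_1 v)=\iv_0 v-h\mathcal{D}v$. We have $\iv_0v\in U_h$ since $U_h$ is a $\mathrm{Vir}(\iv)$-submodule. It remains to check that $\mathcal{D}v\in U_h$. For this we use that $\ccv_1-\iv_1$ commutes with $\mathrm{Vir}(\iv)$, as in the proof of \cref{prop:fuse}. The cleaner route is to note that $\L{-1}-\iv_0$ commutes with the modes of $\iv$, since $[\ccv_0,\iv_n]=(\ccv_0\iv)_n=-n\iv_{n-1}=[\iv_0,\iv_n]$. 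Hence $\L{-1}-\iv_0$ is a $\mathrm{Vir}(\iv)$-module endomorphism of $V$ and preserves each isotypic component. Since $\iv_0$ also preserves $U_h$, so does $\L{-1}=\mathcal{D}$. Thus $S_h\subseteq U_h$. In summary, $A_0\oplus A_2\subseteq U_0$, $A_{\frac12}\oplus S_{\frac12}\subseteq U_{\frac12}$, and $A_{\frac1{16}}\oplus S_{\frac1{16}}\subseteq U_{\frac1{16}}$, and each $U_h\cap V_2$ is spanned by these pieces.

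Next we observe that for $a\in U_g\cap V_2$ and $b\in U_h\cap V_2$, the product $a\times_0 b=\frac12(a_1b+b_1a)$ lies in $\bigoplus_{k\in g\star h}U_k$ by \cref{prop:fuse}(ii). Since the isotypic decomposition of $V_2$ is compatible with the eigenspace decomposition of \cref{lemma:eigenspaces}, projecting onto the eigenspaces gives: the product of an $A_\mu$-vector and an $A_\nu$-vector can only have nonzero components in those $A_\lambda$ lying in $U_k$ for some $k\in g\star h$. Here $U_0\cap V_2$ contributes eigenvalues $\{0,2\}$, $U_{\frac12}\cap V_2$ contributes $\{\frac12,1\}$, and $U_{\frac1{16}}\cap V_2$ contributes $\{\frac1{16},1\}$.

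Now we read off each case. The product $0*0$ corresponds to $0\star0=\{0\}$, giving $\{0,2\}\subseteq\{0,1,2\}$. The product $\frac12*0$ gives $\{\frac12,1\}$, and $\frac1{16}*0$ gives $\{\frac1{16},1\}$. The product $\frac12*\frac12$ gives $\{0,2\}$, and $\frac12*\frac1{16}$ gives $\{\frac1{16},1\}$. Finally $\frac1{16}*\frac1{16}$ corresponds to $\frac1{16}\star\frac1{16}=\{0,\frac12\}$, giving $\{0,2,\frac12,1\}$.

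The main obstacle is justifying that $S$ splits along the isotypic components. The key point is the commutation argument showing that $\L{-1}$ preserves each $U_h$. If that argument fails, a fallback is to note that the statement allows $1$ in every case, so one only needs the weaker claim that $A_1$ is the unique eigenspace that may straddle components.
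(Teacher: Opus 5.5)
Your proof is correct and follows the same route as the paper: you apply \cref{prop:fuse} to the isotypic components $U_h$ and then read off which eigenspaces of $R_\iv$ make up each $U_h\cap V_2$. Your main addition is the check that $\L{-1}-\iv_0$ commutes with every $\iv_n$, which gives $S_h\subseteq U_h$. The paper uses this only implicitly when it passes from $\iv_0T_{\lambda,1}$ to the eigenvalue $1$ (and again in \cref{prop:fuslawBn}), and your version also gives the sharper bounds $0*0,\ \frac12*\frac12\subseteq\{0,2\}$ directly (incidentally, skew-symmetry gives $v_0\iv=-\iv_0v+h\mathcal{D}v$, but the sign does not affect the span).
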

\begin{proof} We have to compute the fusion law of $T_{\mu,2}$ and $T_{\nu,2}$ for $\mu,\nu\in\{0,\frac12,\frac1{16}\}$. Denote the fusion law from \cref{prop:fuse} by $\star$. By the same proposition we have \[T_{\mu,2} \times_0 T_{\nu,2} \subset \bigoplus_{\lambda \in \mu\star\nu}\iv_{-1} T_{\lambda,0}\oplus \iv_0T_{\lambda,1}\oplus T_{\lambda,2}.\] In other words $\mu * \nu$, is a subset of $ (\mu\star\nu) \cup \{1,2\}$, and $2$ is only present if $0\in \mu\star \nu$, since $\iv_{-1} T_{\lambda,0}$ is non-zero only when $\lambda = 0$.
\end{proof}

Next, we prove some specific lemmas to further restrict the fusion law. 
\begin{lemma}
Let $\lambda,\mu\in \{0,\frac1{16},\frac12,1,2\}$. If $2\in \lambda * \mu$, then $\lambda\neq 0$ and  $\lambda = \mu$.
\end{lemma}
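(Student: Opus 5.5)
The plan is to detect the $A_2=\mathbb{F}\iv$-component of a vector in $V_2$ with a linear functional, and then evaluate that functional on products using an associativity-type identity. The natural functional is $x\mapsto \iv_3x\in V_0=\mathbb{F}\vacuum$, which is $(\iv,x)\vacuum$ on $G(V)$. First I check that it vanishes on every eigenspace except $A_2$.

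\begin{itemize}
\item On $A_2$: $\iv_3\iv=\tfrac{c}{2}\vacuum=\tfrac14\vacuum\neq0$, using the Virasoro relations with $c=\tfrac12$.
\item On $A_0,A_{\frac12},A_{\frac1{16}}$: these are $T_{h,2}$, consisting of lowest weight vectors, so $\iv_3$ kills them by definition.
\item On $A_1=S$: write $x=v_0\iv$ with $v\in V_1$. The commutator formula gives $\iv_3v_0\iv=-(v_0\iv)_3\iv+v_0\iv_3\iv$. The last term is $\tfrac14 v_0\vacuum=0$. Skew-symmetry turns $(v_0\iv)_3\iv$ into $\iv_3 v_0\iv$ plus $\mathcal D$-terms that vanish on $V_0$. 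Hence $\iv_3 v_0\iv=-\iv_3v_0\iv$, so it is $0$ since $\operatorname{char}\mathbb{F}\neq2$.
\end{itemize}

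Consequently the projection of $x\in V_2$ onto $A_2$ is $4(\iv_3x)\,\iv$, read off from the scalar $\iv_3x\in\mathbb{F}\vacuum$. So $2\in\lambda*\mu$ if and only if $\iv_3(a\times_0b)\neq0$ for some $a\in A_\lambda$ and $b\in A_\mu$.

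Second, I compute $\iv_3(a\times_0 b)=\tfrac12(\iv_3a_1b+\iv_3b_1a)$ using the commutator formula. For the term $\iv_3a_1b$ this gives
\[
\iv_3a_1b=a_1\iv_3b+\sum_{i\ge0}\binom3i(\iv_ia)_{4-i}b .
\]
The aim is to show the total equals $\mu\,a_3b$ (equivalently $\lambda\,a_3b$), which is the $V_2$-version of $(a\times_0b,\iv)=(a,b\times_0\iv)$. The term $a_1\iv_3b$ lies in $a_1V_0=0$ by (VA3). On $T_{h,2}$ the operators $\iv_n$ vanish for $n\ge2$ and $\iv_1$ acts by $h$. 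The $\iv_0a=\mathcal D$-type terms can be handled with \cref{lem:formulas}(iv). For $a$ or $b$ in $S$ or $\mathbb{F}\iv$, I would instead use the explicit forms $v_0\iv$ and $\iv$, together with $\iv_2\iv=0$ and $\iv_1\iv=2\iv$.

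Third, once $\iv_3(a\times_0b)=\mu\,a_3b=\lambda\,a_3b$ is established, the conclusion follows quickly. If $\lambda\neq\mu$, then $(\lambda-\mu)a_3b=0$ forces $\iv_3(a\times_0b)=0$. If $\lambda=\mu=0$, the value is $0\cdot a_3b=0$. In either case $2\notin\lambda*\mu$, which proves the statement.

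The main obstacle is the second step. \cref{lemma:frob_form} is proved only on $G(V)$, whereas the eigenspaces $T_{h,2}$ need not lie in $\ker\L1$. So I cannot simply cite associativity of the form; I have to redo the commutator bookkeeping on $V_2$ with one argument equal to $\iv$. I expect $\L1\iv=0$ (\cref{lemma:42}) and the lowest weight conditions to replace the $\ker\L1$ hypotheses used there. If a direct symmetric identity proves hard, a fallback is to prove only $\iv_3(a\times_0b)=\mu\,a_3b$ and $=\lambda\,b_3a$, and then use $a_3b=b_3a$ from \cref{L1V10}(i).
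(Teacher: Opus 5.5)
Your Step 1 is correct: on all of $V_2$, the functional $\iv_3$ kills $A_0,A_{\frac12},A_{\frac1{16}},A_1$, and $\iv_3\iv=\frac14\vacuum$. Step 2, however, cannot be carried out. You read $A_\lambda$ as eigenspaces of $R_\iv$ on all of $V_2$, and under that reading both your target identity $\iv_3(a\times_0b)=\mu\,a_3b$ and the statement itself are false. So the obstacle you flag is not a bookkeeping issue. The conditions $\L1\iv=0$ and $\iv_na=0$ ($n\geq2$) do not make up for $\L1a\neq0$.

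To see that $A_h\not\subseteq G(V)$, take $d\in T_{h,1}$ with $h\neq0$. The vector $a=\L{-1}d-\iv_0d$ satisfies $\iv_1a=ha$ and $\iv_na=0$ for $n\geq2$, so $a\in A_h$. Yet $\L1a=2(1-h)d\neq0$.

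Concretely, work in a lattice VOA as in \cref{lem:isinglattice}. Take $\iv=\frac1{16}\alpha(-1)^2+\frac14(e^\alpha+e^{-\alpha})$ with $\epsilon(\alpha,-\alpha)=1$, and $d=\alpha(-1)\in T_{\frac12,1}$. Then $a=\frac12\alpha(-2)+(e^\alpha-e^{-\alpha})\in A_{\frac12}$ and $b=d_0\iv=e^\alpha-e^{-\alpha}\in A_1$. A direct computation gives
\[
a\times_0b=-\alpha(-1)^2-2(e^\alpha+e^{-\alpha}),\qquad \iv_3(a\times_0b)=-3\vacuum,\qquad a_3b=-2\vacuum .
\]
By your own Step 1, $a\times_0b$ has $A_2$-component $-12\iv\neq0$, although $\frac12\neq1$. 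Also, $-3\vacuum$ is neither $\lambda a_3b=-\vacuum$ nor $\mu a_3b=-2\vacuum$.

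The lemma holds only for the fusion law in $(G(V),\times_0)$, that is, for the eigenspaces $A_\lambda\cap G(V)$. Note that $a\notin G(V)$ above. This is the setting of the paper's proof, and there no recomputation is needed.

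By \cref{lemma:42}, $\iv\in G(V)$. So for $a\in A_\lambda\cap G(V)$ and $b\in A_\mu\cap G(V)$, \cref{lemma:frob_form} gives $(\iv,a\times_0b)=(\iv\times_0a,b)=\lambda(a,b)$. Using symmetry of the form and commutativity of $\times_0$, it also gives $(\iv,a\times_0b)=\mu(a,b)$.

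Combine this with your Step 1, or with the orthogonality $2(\iv,x)=(\iv,\iv\times_0x)=\nu(\iv,x)$ for $x\in A_\nu\cap G(V)$, which also follows from associativity. Either way, the $A_2$-component of $a\times_0b$ vanishes unless $\lambda=\mu\neq0$.

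This is exactly the paper's argument. Drop the attempt to extend it to $V_2$ and restrict to $G(V)$ from the start.
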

\begin{proof}
	Take two vectors $u \in A_\lambda$ and $v\in A_\mu$.
	Use $\pi\colon A \to \mathbb{F} \iv$ to denote the projection onto the $2$-eigenspace $A_2$. 
	Suppose that $\pi( u\times_0 v) = \rho\iv$ for some nonzero $\rho\in \mathbb{F}$. Recall the associative bilinear form from \cref{lemma:frob_form}. We have $(\iv, u\times_0 v) = \frac{1}{2}\rho$. However, since $(\cdot,\cdot)$ is associative, this is equal to $ \frac{1}{2}\rho = \lambda(u,v) = \mu(u,v)$, whence the lemma.
\end{proof}
%
\begin{lemma}\label{lem:1/2one}
	Let $\lambda \in \{0,\frac1{16},\frac12,1,2\}$. Then $\lambda \notin 1*\lambda$.
\end{lemma}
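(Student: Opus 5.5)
The plan is to use that the $1$-eigenvectors are exactly images of $\iv$ under the derivations $u_0$ (\cref{lem:derivations}). Commuting such a derivation past $R_\iv$ will express $A_1\times_0 A_\lambda$ as the image of an operator of the form $(\lambda - R_\iv)$, and that operator has no component in $A_\lambda$.

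Fix $\lambda\in\{0,\frac1{16},\frac12,1,2\}$, an element of $A_1$, and $v\in A_\lambda$. By \cref{lemma:eigenspaces} the element of $A_1=S$ can be written as $u_0\iv$ with $u\in V_1$. Write $D\coloneqq u_0$. By (VOA4), $D$ preserves $V_2$. The Leibniz computation in the proof of \cref{lem:derivations} holds verbatim on all of $V_2$, for two reasons.
\begin{itemize}
\item It only uses $a\times_0 b=\frac12(a_1b+b_1a)$, which is valid on $V_2$.
\item It uses the commutator formula $[u_0,a_1]=(u_0a)_1$, which holds exactly since only the $i=0$ term appears.
\end{itemize}
By \cref{lem:derivations}, $D$ also preserves $G(V)$, so the argument works equally well inside $G(V)$.

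Apply the Leibniz rule to $\iv\times_0 v = \lambda v$:
\[
\lambda\, Dv = D(\iv\times_0 v) = (D\iv)\times_0 v + \iv\times_0 Dv .
\]
This gives
\[
(u_0\iv)\times_0 v = (\lambda - R_\iv)(Dv).
\]
Since $R_\iv$ is semisimple on $V_2$ with eigenspaces $A_\mu$ (\cref{lemma:eigenspaces}), decompose $Dv=\sum_\mu w_\mu$ with $w_\mu\in A_\mu$. Then
\[
(\lambda-R_\iv)Dv=\sum_\mu(\lambda-\mu)w_\mu .
\]
Its projection onto $A_\lambda$ is $(\lambda-\lambda)w_\lambda=0$. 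Hence the projection of $A_1\times_0A_\lambda$ onto $A_\lambda$ vanishes, that is, $\lambda\notin 1*\lambda$.

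The main point needing care is verifying that $u_0$ satisfies the Leibniz rule for $\times_0$ on the relevant space and that it preserves the eigenspace decomposition's ambient space. Both follow from (VOA4) and the single-term commutator formula. Everything else is linear algebra with the semisimple operator $R_\iv$.
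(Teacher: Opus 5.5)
Your proof is correct and follows the paper's argument: you write the $1$-eigenvector as $D(\iv)$ for the derivation $D=u_0$, use the Leibniz rule to get $(D\iv)\times_0 v=(\lambda-R_\iv)D(v)$, and note that this vector has no component in $A_\lambda$. Your observation that the Leibniz rule for $u_0$ already holds on all of $V_2$ is a small but useful clarification, since it makes the projection argument valid whether the eigenspaces are taken in $V_2$ or in $G(V)$; the paper leaves this point implicit.
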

\begin{proof}
	Take $u\in A_1$, so it is an arbitrary $1$-eigenvector of $R_\iv$. By \cref{lem:derivations}, the element $u$ is equal to $D(\iv)$ for some $D\in \mathrm{Der}(A)$.
	
	If $v\in A_\lambda$ is an arbitrary $\lambda$-eigenvector, then we can use the Leibniz rule to compute $u \times_0 v$:
	\begin{multline*}
		u \times_0 v =  D(\iv)\times_0 v  = D(\iv\times_0 v) - \iv\times_0 D(v) = \lambda D(v) - R_\iv D(v).
	\end{multline*}
	Write $D(v)_\lambda$ for the projection of $D(v)$ onto $A_\lambda$. Then the projection of $u \times_0 v$ onto $A_\lambda$ is $\lambda D(v)_\lambda - R_\iv D(v)_\lambda =0$, hence the product $u \times_0 v$ does not contain an $A_\lambda$-summand.
\end{proof}
%
\begin{lemma}\label{lem:1/2}
	Let $\lambda\in \{0,\frac1{16},\frac12\}$. Then $1 \notin \lambda*\lambda$.
\end{lemma}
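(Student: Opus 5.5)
Fix $\lambda\in\{0,\frac1{16},\frac12\}$ and $u,v\in A_\lambda=T_{\lambda,2}$. We must show that the projection of $u\times_0 v$ onto $A_1=S$ vanishes. Recall from \cref{rem:e2inj} that $\iv_2$ maps $S$ injectively into $V_1$. We also know that $\iv_2$ kills $\mathbb{F}\iv$ and every $T_{h,2}$.

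So the $S$-component of any $w\in V_2$ is detected by $\iv_2 w$, provided the decomposition of $V_2$ from \cref{lemma:eigenspaces} is used. Since $\iv_2$ is injective on $S$ and zero on the other eigenspaces, it suffices to prove $\iv_2(u\times_0 v)=0$, that is,
\[\iv_2 u_1 v + \iv_2 v_1 u = 0.\]

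To compute this, I would use the commutator formula (\cref{lem:formulas}(iii)):
\[[\iv_2,u_1]=(\iv_0u)_3+2(\iv_1u)_2+(\iv_2u)_1 .\]
Since $\iv_2 u=0$ and $\iv_1u=\lambda u$, and $\iv_2 v=0$, this gives
\[\iv_2u_1v=(\iv_0u)_3v+2\lambda u_2v .\]
Next, express $\iv_0 u$ through skew-symmetry:
\[\iv_0u=-u_0\iv+\mathcal D(u_1\iv)-\D2(u_2\iv)+\cdots.\]
Using \cref{lem:formulas}(iv), $(\mathcal D x)_3=-3x_2$, and the higher $\D i$ terms similarly become lower modes. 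A cleaner alternative is to apply the commutator formula in the other order, computing $[\iv_2,u_1]$ via $[u_1,\iv_2]$ with the iterate formula (\cref{lem:formulas}(ii)), and then using $u_k\iv$ for $k\ge 1$ in terms of lowest weight data. By the lowest weight property and skew-symmetry, $u_1\iv=\iv_1u-\mathcal D(\iv_2 u)+\cdots=\lambda u$ and $u_2\iv=-\iv_2u+\cdots=0$, $u_3\iv=\iv_3u=0$. This yields $(\iv_0u)_3 = (\L{-1}u\text{-type terms})_3$, which reduce to multiples of $u_2$. I expect the result to be of the form $\iv_2u_1v=c(\lambda)\,u_2v$ for an explicit scalar $c(\lambda)$.

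By symmetry, $\iv_2v_1u=c(\lambda)\,v_2u$. \cref{L1V10}(ii) gives $v_2u=-u_2v$, so the sum vanishes, which proves the lemma.

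The main obstacle is the bookkeeping in the expression $(\iv_0u)_3v$. I must make sure that it genuinely reduces to a scalar multiple of $u_2v$ whose coefficient depends only on $\lambda$, and not on $u$ individually, since that is exactly what makes the final expression antisymmetric and hence zero. If a stray term such as $u_1\iv_2 v$ or $\iv_3$-contributions appears, it should vanish, because $\iv_k v=0$ for $k\ge2$ by the lowest weight property of $v$.
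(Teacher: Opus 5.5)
You have the right overall frame, matching the paper: reduce to $\iv_2(u\times_0v)=0$ via \cref{rem:e2inj}, then use $\iv_2u_1v=(\iv_0u)_3v+2\lambda u_2v$. But the step you flagged as the main obstacle is a genuine gap, and the outcome you expect is false. Skew-symmetry only gives $\iv_0u=-u_0\iv+\lambda\mathcal{D}u$, so $(\iv_0u)_3=-(u_0\iv)_3-3\lambda u_2$. The term $(u_0\iv)_3$ cannot be reduced to lower modes of $u$: $\iv_0$ is the $\L{-1}$ of $\mathrm{Vir}(\iv)$, not of $V$, so $\iv_0u$ is not a multiple of $\mathcal{D}u$. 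Your alternative via $[u_1,\iv_2]$ produces the same $(u_0\iv)_3$.

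In fact the commutator formula for $[\iv_1,u_2]$ gives $(\iv_0u)_3v=(\iv_1-2\lambda)u_2v$, i.e.\ $\iv_2u_1v=\iv_1u_2v$. For $\lambda=\frac1{16}$, the vector $u_2v\in V_1$ can have nonzero components in both $T_{0,1}$ and $T_{\frac12,1}$, on which $\iv_1$ acts by different scalars. So no scalar $c(\frac1{16})$ with $\iv_2u_1v=c(\frac1{16})u_2v$ exists. This already happens in a lattice VOA with $\iv=\iv(1,\alpha)$, $u=e^\beta$, $v=e^\gamma$, where $\langle\beta,\beta\rangle=\langle\gamma,\gamma\rangle=4$, $\langle\alpha,\beta\rangle=\langle\alpha,\gamma\rangle=1$ and $\langle\beta,\gamma\rangle=-3$. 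There $u,v\in T_{\frac1{16},2}$ and $u_2v=\pm e^{\beta+\gamma}$, but $\iv_1e^{\beta+\gamma}$ has a nonzero $e^{\beta+\gamma-\alpha}$-component, so $u_2v$ is not an $\iv_1$-eigenvector.

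What you need is not a scalar but antisymmetry in $(u,v)$. Using $\iv_2u_1v=\iv_1u_2v$, you get $2\iv_2(u\times_0v)=\iv_1(u_2v+v_2u)=0$ by \cref{L1V10}(ii), and your argument goes through.

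The paper instead shows directly that $(\iv_0u)_3v$ is antisymmetric. By the iterate formula, $(\iv_0u)_3v=\iv_0u_3v-u_3\iv_0v=-u_3\iv_0v$, because $u_3v\in V_0=\mathbb{F}\vacuum$ and $\iv_0\vacuum=0$. Moreover $u_3\iv_0v=(\iv_0v)_3u$ by skew-symmetry, since the correction term $\mathcal{D}(u_4\iv_0v)$ vanishes as $u_4\iv_0v\in V_0$. Together with $u_2v=-v_2u$, this makes $\iv_2u_1v+\iv_2v_1u$ vanish.
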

\begin{proof}
Take $u,v\in A_\lambda$. We prove first that $\iv_2(u\times_0 v) = 0$ using the following formulas. We have that
	$\iv_2u = 0 = \iv_2 v$, and $\iv_1u=\lambda u$,
because $u$ and $v$ are lowest weight vectors for $\iv$. 

By \cref{L1V10}(i) and (ii) we have
\[
	u_2v=-v_2u\text{, and } (\iv_0v)_3u=u_3\iv_0v.
\]
The commutator formula (\cref{lem:formulas}(iii)) yields
	\[ [\iv_2,u_1]=(\iv_0u)_3+2(\iv_1u)_2+(\iv_2u)_1 = (\iv_0u)_3+2\lambda u_2.\]
We can finally compute $\iv_2(u\times_0 v)$. In the following, we underline groupings of terms that equal zero:
\begin{align*}
	2\iv_2(u\times_0 v) &= \iv_2u_1v+\iv_2v_1u \\
	&= \underline{u_1\iv_2v}+\underline{v_1\iv_2u}+[\iv_2,u_1]v+[\iv_2,v_1]u \\
	&=(\iv_0u)_3v+(\iv_0v)_3u+\underline{2\lambda u_2v+2\lambda v_2u} \\
	&=\underline{\iv_0u_3v}-u_3\iv_0v+(\iv_0v)_3u\\
	&=\underline{(\iv_0v)_3u-u_3\iv_0v}=0.
\end{align*}

The map $\iv_2$ is an injection from $A_1$ to $V_1$ by \cref{rem:e2inj}, so the projection of $u\times_0 v$ onto $A_1$ is zero.
\end{proof}
When we combine all the previous lemmas we obtain the following fusion law.
\begin{theorem}\label{thm:fusion_law}
	Let $V$ be a VOA of strong CFT type and $\iv\in V$ an Ising vector. Then $\iv$ satisfies the following fusion law in $(G(V),\times_0)$: \[	\renewcommand{\arraystretch}{1.5}
	\begin{array}{c||c|c|c|c|c}
	
 *          & 0                    & \frac12        & \frac1{16}   & 1                      & 2          \\\hline\hline
 0          & 0                    & \frac12,1      & \frac1{16},1 & \frac1{16},\frac12,1   &            \\\hline
 \frac12    & \frac12,1            & 0,2            & \frac1{16},1 & 0,\frac1{16},1         & \frac12    \\\hline
 \frac1{16} & \frac1{16},1         & \frac1{16},1   & 0,\frac12,2  & 0,\frac12,1            & \frac1{16} \\\hline
 1          & \frac1{16},\frac12,1 & 0,\frac1{16},1 & 0,\frac12,1  & 0,\frac12,\frac1{16},2 & 1          \\\hline
 2          &                      & \frac12        & \frac1{16}   & 1                      & 2
 
	\end{array}. 
	\]	
 Each cell in this table, indexed by row $\lambda$ and column $\mu$, contains the elements that can occur in $\lambda * \mu$.

\end{theorem}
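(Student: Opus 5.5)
The table is assembled entry by entry from the lemmas just proved, all of which concern the eigenspace decomposition $V_2=A_0\oplus A_{\frac1{16}}\oplus A_{\frac12}\oplus A_1\oplus A_2$ of $R_\iv$ given by \cref{lemma:eigenspaces}. Since $G(V)$ is $R_\iv$-stable and contains $\iv$ and $S$ (\cref{lemma:42,lem:derivations}), its eigenspaces are $A_\lambda\cap G(V)$. Hence any restriction on the fusion law in $V_2$ also holds in $G(V)$, and we may argue in $V_2$ throughout. Symmetry of $*$ means only the upper triangle needs checking.

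\textbf{Row and column $2$.} These are exactly the statement that $2*\lambda=\{\lambda\}$ for $\lambda\neq0$ and $2*0=\emptyset$.

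\textbf{Products among $A_0$, $A_{\frac12}$, $A_{\frac1{16}}$.} Start from the inclusions of \cref{lem:stdfusion}. Then remove $2$ from every off-diagonal product and from $0*0$, using the lemma that $2\in\lambda*\mu$ forces $\lambda=\mu\neq0$. Next remove $1$ from the diagonal products $0*0$, $\frac12*\frac12$ and $\frac1{16}*\frac1{16}$ by \cref{lem:1/2}. This gives
\[
0*0=\{0\},\qquad \tfrac12*\tfrac12\subseteq\{0,2\},\qquad \tfrac1{16}*\tfrac1{16}\subseteq\{0,\tfrac12,2\},
\]
and the off-diagonal entries $\{\frac12,1\}$, $\{\frac1{16},1\}$, $\{\frac1{16},1\}$, which match the table.

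\textbf{Row $1$.} Here \cref{lem:stdfusion} gives nothing directly, so use symmetry: $\nu\in 1*\lambda$ roughly corresponds to $1\in\lambda*\nu$ via the associative form. The cleaner route is as follows. The candidates for $1*\lambda$ are all of $\{0,\frac1{16},\frac12,1,2\}$. By \cref{lem:1/2one}, $\lambda\notin1*\lambda$. For $\lambda\neq1$, the lemma on $2$ gives $2\notin1*\lambda$. For $\lambda=0$, we must also exclude $0$ (already done by \cref{lem:1/2one}), leaving $\{\frac1{16},\frac12,1\}$. For $\lambda=\frac12$ this leaves $\{0,\frac1{16},1\}$, and for $\lambda=\frac1{16}$ it leaves $\{0,\frac12,1\}$. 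For $1*1$ we remove $1$ by \cref{lem:1/2one} and keep $2$, which gives $\{0,\frac1{16},\frac12,2\}$.

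This is exactly the displayed table. Since the table only lists \emph{possible} entries, no realization statements are needed.

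\textbf{Main obstacle.} The delicate point is row $1$. One must check that no further restriction is claimed (for instance $\frac12\in 1*\frac1{16}$ is allowed), and that the lemmas used hold in $V_2$ rather than only in $G(V)$. In particular, \cref{lem:1/2one} uses derivations of $G(V)$. Since $A_1=S\subseteq G(V)$ and $u_0$ is a derivation of $V_2$ under $\times_0$ by the same Leibniz computation, the argument carries over. The rest is bookkeeping.
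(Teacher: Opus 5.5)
Your entry-by-entry bookkeeping is exactly the paper's proof, which simply combines the preceding lemmas, and every cell you derive is correct. The step that fails is the reduction ``we may argue in $V_2$ throughout.'' The lemma on the eigenvalue $2$ (that $2\in\lambda*\mu$ forces $\lambda=\mu\neq0$) is proved using the associativity of $(\cdot,\cdot)$ from \cref{lemma:frob_form}. That associativity is established only on $G(V)$, since its proof uses $\L1x=\L1y=\L1z=0$. Moreover, the lemma's conclusion is genuinely false in $V_2$.

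Here is a counterexample. Take the rank-one lattice $L=\Z\alpha$ with $\langle\alpha,\alpha\rangle=4$, the Ising vector $\iv=\iv(1,\alpha)$, and $\omega=\frac18\alpha(-1)^2$.
\begin{itemize}
\item The vector $u=\frac12\alpha(-2)\vacuum+e^{\alpha}-e^{-\alpha}$ satisfies $\iv_1u=\frac12u$ and $\iv_2u=0$, so $u\in T_{\frac12,2}$. However $\L1u=\alpha(-1)\vacuum\neq0$, so $u\notin G(V)$.
\item The vector $v=\alpha(0)\iv=e^{\alpha}-e^{-\alpha}$ lies in $S$.
\item A direct computation gives
\[u\times_0v=-\alpha(-1)^2\vacuum-2(e^{\alpha}+e^{-\alpha})=-12\,\iv-4(\omega-\iv),\]
where $\omega-\iv\in T_{0,2}$.
\end{itemize}
Hence $2\in\frac12*1$ in $(V_2,\times_0)$. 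Your ``main obstacle'' paragraph checks \cref{lem:1/2one} in $V_2$ but misses this lemma. Your restriction argument (fusion constraints in $V_2$ imply those in $G(V)$) is correct. It cannot be used here, because the fusion law on $V_2$ is strictly weaker than the one claimed for $G(V)$.

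The repair is easy. Do the bookkeeping for the eigenspaces $A_\lambda\cap G(V)$ of $R_\iv|_{G(V)}$.
\begin{itemize}
\item \cref{lem:stdfusion} transfers from $V_2$ by your restriction observation.
\item \cref{lem:1/2} holds in $G(V)$ as stated.
\item \cref{lem:1/2one} holds in $G(V)$ because $u_0$ is a derivation of $G(V)$.
\item The eigenvalue-$2$ lemma holds in $G(V)$ because the form is associative there.
\end{itemize}
With that change your argument coincides with the paper's.
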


\subsection{Ising vectors of $\sigma$-type} From the perspective of automorphisms, the above fusion law is rather disappointing, as it is not graded. However, by restricting to Ising vectors of $\sigma$-type, we recover the possibility of studying automorphisms. An Ising vector $\iv$ is called of \emph{$\sigma$-type} if the module $L(\frac12,\frac{1}{16})$ does not occur as a $\mathrm{Vir}(\iv)$-submodule of $V$.

\begin{proposition}\label{prop:fuslawBn}
	If $e$ is an Ising vector of $\sigma$-type, then $e$ satisfies the following fusion law in $(G(V),\times_0)$:
\[	\renewcommand{\arraystretch}{1.5}
	\begin{array}{c||c|c|c|c}
	
 *          & 0         & \frac12   & 1         & 2          \\\hline\hline
 0          & 0         & \frac12,1 & \frac12,1 &            \\\hline
 \frac12    & \frac12,1 & 0,2       & 0         & \frac12    \\\hline
 1          & \frac12,1 & 0         & 0,2       & 1          \\\hline
 2          &           & \frac12   & 1         & 2
 
	\end{array}. 
	\]	
	
\end{proposition}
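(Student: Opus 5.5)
The plan is to specialize \cref{thm:fusion_law} and remove the remaining unwanted entries with a $\mathbb{Z}/2$-grading coming from \cref{prop:fuse}. Since $\iv$ is of $\sigma$-type, only $h\in\{0,\frac12\}$ occur in \cref{prop:fuse}(i). Hence $V=U_0\oplus U_{\frac12}$, with $T_{\frac1{16},n}=0$ for all $n$, and in particular $A_{\frac1{16}}=0$. By \cref{prop:fuse}(ii), restricted to $\{0,\frac12\}$, the decomposition $V=U_0\oplus U_{\frac12}$ is a $\mathbb{Z}/2$-grading:
\[
Y(U_g,z)U_h\subseteq U_{g+h}[[z,z^{-1}]],
\]
with indices added mod $1$. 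Since $a\times_0 b=\frac12(a_1b+b_1a)$, we get $(U_g\cap V_2)\times_0(U_h\cap V_2)\subseteq U_{g+h}\cap V_2$.

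The key step is to place each eigenspace of $R_\iv$ in the correct graded piece.

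\emph{Even part.} We have $A_0=T_{0,2}\subseteq U_0$, and $A_2=\mathbb{F}\iv\subseteq U_0$, because $\iv=\iv_{-1}\vacuum$ lies in the module generated by $\vacuum\in T_{0,0}$.

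\emph{Odd part.} We have $A_{\frac12}=T_{\frac12,2}\subseteq U_{\frac12}$. For $A_1=S$, use the decomposition \eqref{eq:S}: the proof of \cref{lemma:eigenspaces} gives $S_0=0$, and $\sigma$-type gives $S_{\frac1{16}}=0$. So $S=S_{\frac12}$, i.e.\ every element of $S$ is $v_0\iv$ with $v\in T_{\frac12,1}$. By skew-symmetry (\cref{lem:formulas}(i)),
\[
v_0\iv=\iv_0v-\mathcal{D}(\iv_1v)=\iv_0v-\tfrac12\L{-1}v .
\]
Both terms lie in the $\mathrm{Vir}(\iv)$-module generated by $v$. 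This uses that $\L{-1}=\ccv_0$ preserves each $U_h$; I would justify this by noting that $\ccv_1-\iv_1$ commutes with $\mathrm{Vir}(\iv)$ (as in the proof of \cref{prop:fuse}), or more directly by observing that $\iv_0v$ and $\mathcal{D}v=v_{-2}\vacuum$ are obtained from $v$ via $Y(U_{\frac12},z)U_0\subseteq U_{\frac12}[[z,z^{-1}]]$. Either way $A_1\subseteq U_{\frac12}$.

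Hence $U_0\cap G(V)=A_0\oplus A_2$ and $U_{\frac12}\cap G(V)=A_{\frac12}\oplus A_1$.

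The parity then gives $0*1\subseteq\{\frac12,1\}$, $\frac12*1\subseteq\{0,2\}$ and $1*1\subseteq\{0,2\}$. Intersecting with the entries of \cref{thm:fusion_law} (after deleting $\frac1{16}$) yields:
\[
0*1\subseteq\{\tfrac12,1\},\qquad \tfrac12*1\subseteq\{0\},\qquad 1*1\subseteq\{0,2\}.
\]
The remaining entries, $0*0=\{0\}$, $0*\frac12\subseteq\{\frac12,1\}$, $\frac12*\frac12\subseteq\{0,2\}$ and the row of $2$, are already as required by \cref{thm:fusion_law}.

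The only delicate point is showing that $A_1=S$ lies in the odd part $U_{\frac12}$. This needs both the vanishing of $S_0$ and $S_{\frac1{16}}$ and the fact that $\mathcal{D}$ preserves the isotypic components; the rest is bookkeeping against the earlier table.
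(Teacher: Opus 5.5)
Your proposal is correct and follows the paper's own argument: drop the $\frac1{16}$ entries, show that the $1$-eigenspace lies in $U_{\frac12}$, and use the $\mathbb{Z}/2$-fusion from \cref{prop:fuse} to force $\frac12*1,\ 1*1\subseteq\{0,2\}$ before intersecting with \cref{thm:fusion_law}. One slip: $\L{-1}v$ is in general not in the $\mathrm{Vir}(\iv)$-module generated by $v$ (take $v=\alpha(-1)$ in the lattice example at the end of the paper), but only in $U_{\frac12}$, which your second justification does establish; in fact $v_0\iv\in U_{\frac12}$ follows at once from $v\in U_{\frac12}$, $\iv\in U_0$ and \cref{prop:fuse}(ii), so skew-symmetry is not needed.
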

\begin{proof}
Note that we can first remove all the $\frac1{16}$ occurences from the previous table. Because $L(\frac{1}{2},\frac{1}{16})$ does not occur as a submodule, we can also see that the $1$-eigenspace of $R_\iv$ is contained in $U_{\frac{1}{2},1}$.
	Hence, analogously to \cref{lem:stdfusion}, we can use \cref{prop:fuse} to conclude that 
	$
 	1*\frac12 \subset \left\{ 0,2\right\}$  and  $1*1 \subset \left\{ 0,2\right\}$.
\end{proof}

\begin{remark}\label{rem:kar31}
	The reason for excluding characteristics $3$ and $31$ should be clear: in those cases, either $2=\frac12$ or $2=\frac1{16}$. Then the Ising vector $e$ is no longer \emph{primitive}, i.e., its $2$-eigenspace is no longer spanned by $e$, and the fusion laws in \cref{thm:fusion_law,prop:fuslawBn} have repeated eigenvalues. The fusion laws however still make sense in the context of \emph{decomposition algebras}, cf.\@ \cite{DMPSVC}. We leave it to the reader to check the necessary details.
\end{remark}
\begin{remark}
	The fusion laws announced in the introduction (\cref{table:fl}) look a bit different from the ones in \cref{thm:fusion_law,prop:fuslawBn}. This is because the introduction is written with respect to idempotents, while in this section, we stick to Ising vectors. These are related by a scalar $\frac{1}{2}$, so all eigenvalues should be adjusted accordingly. 
\end{remark}

\subsection{Minimality.} The remainder of this section will be dedicated to showing that both fusion laws are minimal, i.e., that no further entries can be omitted without further assumptions on $V$. 

Let $L$ be an even lattice, $\mathbb{F}$ a field, and let $V_{L,\mathbb{F}}$ be the lattice vertex algebra over $\mathbb{F}$ as in \cite{M14}, i.e., $V_{L,\mathbb{F}} = V_{L,\Z}\otimes_\Z \mathbb{F}$, where $V_{L,\Z}$ is the integral form of the lattice vertex algebra constructed by Dong and Griess in \cite{DG12}. The vertex algebra $V_{L,\mathbb{F}}$ contains elements $e^\alpha$ which are not uniquely determined, but depend on the choice of a $2$-cocycle $\epsilon\colon L\times L \to \{\pm 1\}\subset \mathbb{F}$ as in \cite[Section~6.4 and Proposition~6.5.5]{LL04}. We will not delve deeper into this topic, but it suffices for our purposes that given $\alpha\in L$ such a cocycle $\epsilon$ exists with $\epsilon(\alpha,-\alpha)=1$, by using the methods in the proof  \cite[Proposition~5.2.3]{FLM}, cf.\@ \cite[Remark~6.5.6]{LL04}.
 
With this convention, the following was observed by Chayet in the case of Chayet--Garibaldi algebras, but they appeared first in \cite{DLMN} in the context of VOAs.
\begin{lemma}\label{lem:isinglattice}
	Let $V_{L,\mathbb{F}}$ be a lattice VOA with $L$ a positive-definite even lattice, and let $\alpha\in L$ be a lattice vector of length $4$. Then the element
	\[ \iv(\lambda,\alpha) = \frac{1}{16}\alpha(-1)^2 \pm \frac{1}{4}(\lambda e^{\alpha} + \lambda^{-1}e^{-\alpha}),  \]
	with $\lambda \in \mathbb{F}^\times$, is an Ising vector of $V_{L,\mathbb{F}}$.
\end{lemma}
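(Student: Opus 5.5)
We need to show that $\iv=\iv(\lambda,\alpha)$ generates a copy of $L(\frac12,0)$ with $\iv$ mapping to the conformal vector. We proceed in three steps.

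\textbf{Step 1: reduce to $\lambda=1$.} Since $\lambda$ can be absorbed into an automorphism, we first reduce to this case. Concretely, the map $e^{\beta}\mapsto \lambda^{\langle \beta,\alpha\rangle/4}e^\beta$ is, formally, an automorphism. Because $\langle\alpha,\alpha\rangle=4$, an integral version exists whenever we rescale only along the rank-one sublattice. For the proof itself it suffices to work inside the vertex subalgebra $V_{\Z\alpha,\mathbb{F}}\subseteq V_{L,\mathbb{F}}$. There the assignment $\alpha(-1)\mapsto\alpha(-1)$, $e^{\pm\alpha}\mapsto \lambda^{\pm1}e^{\pm\alpha}$ extends to an automorphism; this is the standard torus action $\exp(\mu\,\alpha(0))$, and over an arbitrary field it can be defined directly by scaling $e^{n\alpha}$ by $\lambda^n$. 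The sign $\pm$ is handled the same way with $\lambda\mapsto-\lambda$. So it suffices to treat $\iv=\frac1{16}\alpha(-1)^2+\frac14(e^\alpha+e^{-\alpha})$ in $V_{\Z\alpha,\mathbb{F}}$, using the cocycle with $\epsilon(\alpha,-\alpha)=1$.

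\textbf{Step 2: compute the Virasoro relations.} We show that the modes $\iv_{n+1}$ satisfy the Virasoro relations \eqref{eq:vir} with $c=\frac12$. By the commutator formula (\cref{lem:formulas}(iii)), this reduces to checking the products
\[
\iv_3\iv=\tfrac14\vacuum,\quad \iv_2\iv=0,\quad \iv_1\iv=2\iv,\quad \iv_0\iv=\L{-1}\iv .
\]
Here $\iv_3\iv$ is $\frac{c}{2}\vacuum$ with $c=\frac12$, and the last equation follows from skew-symmetry once the others hold. These are finite computations in the lattice VOA. One uses $\alpha(1)\alpha(-1)^2\vacuum=8\alpha(-1)\vacuum$, the relations $(e^{\alpha})_n e^{-\alpha}$ for $n\ge -1$ (for example $(e^\alpha)_3e^{-\alpha}=\epsilon(\alpha,-\alpha)\vacuum$ and $(e^\alpha)_1e^{-\alpha}=\frac12\alpha(-1)^2\vacuum+\frac12\alpha(-2)\vacuum$), and the fact that $e^{\pm\alpha}$ have weight $2$ with $(e^{\alpha})_ne^{\alpha}=0$ for $n\ge -3$, since $\langle\alpha,\alpha\rangle=4$. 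Only $\frac12$ and $\frac1{16}$-type denominators appear. All structure constants lie in $\Z[\frac12]$, so the classical characteristic-zero identity, originally from \cite{DLMN}, descends to $\mathbb{F}$ via the integral form $V_{L,\Z}$. This gives a VOA map $V(\frac12,0)\to \mathrm{Vir}(\iv)$ sending $\ccv\mapsto\iv$. The $\alpha(-2)$ terms cancel between $(e^\alpha)_1e^{-\alpha}$ and $(e^{-\alpha})_1e^{\alpha}$, which is where $\epsilon(\alpha,-\alpha)=\epsilon(-\alpha,\alpha)=1$ is used.

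\textbf{Step 3 (main obstacle): show the image is the simple quotient.} We must show that $\mathrm{Vir}(\iv)\cong L(\frac12,0)$, i.e.\ that the image is simple. In characteristic zero one would exhibit the singular vector of weight $6$ and check that it vanishes. Instead, we plan to use the invariant-form/complete-reducibility results of \cite{DR16}.

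First, $V_{\Z\alpha,\mathbb{F}}$, being a positive-definite lattice VOA reduced from $\Z$, carries a nondegenerate invariant form. Its restriction to the image of $V(\frac12,0)$ has radical containing the maximal submodule. Therefore the image is simple if and only if the restricted form is nondegenerate.

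Second, the simplest route is to identify $\mathrm{Vir}(\iv)$ explicitly. Over $\Z[\frac12]$, $V_{\Z\alpha}$ is the lattice VOA $V_{2\Z\beta}$ with $\langle\beta,\beta\rangle=1$, which is $V_{\Z\beta}^+$-like. The vector $\iv$ is the standard Ising vector of the free-fermion decomposition $V_{\Z\beta}\cong L(\frac12,0)^{\otimes 2}\oplus\cdots$. Thus $\mathrm{Vir}(\iv)$ is one tensor factor of a simple free-fermion VOA, and simplicity persists over $\mathbb{F}$ with $\operatorname{char}\mathbb{F}\ne2,7$ because the fermionic construction is defined over $\Z[\frac12]$. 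If that identification is too heavy, the fallback is to check directly that the image of the weight-$6$ singular vector of $V(\frac12,0)$ vanishes in $V_{\Z\alpha,\mathbb{F}}$. By \cite{DR16}, away from characteristic $7$ the maximal submodule is generated by that vector. The check is a finite computation with integer coefficients.
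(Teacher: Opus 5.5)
Your strategy is the paper's: reduce to $\lambda=1$ by a diagonal automorphism, then use the $\lambda=1$ case. The paper disposes of that case by citing \cite{DLMN}, a characteristic-zero result, and extends the base field to build the automorphism. You avoid the field extension by scaling $e^{n\alpha}$ by $\lambda^n$ inside $V_{\Z\alpha,\mathbb{F}}$. Alternatively, $\alpha$ is primitive in the even lattice $L$, so a character $\chi\colon L\to\mathbb{F}^\times$ with $\chi(\alpha)=\lambda$ works on all of $V_{L,\mathbb{F}}$. Passing back to $V_{L,\mathbb{F}}$ is harmless, because the image of the simple VOA $L(\frac12,0)$ under the nonzero map $V_{\Z\alpha,\mathbb{F}}\to V_{L,\mathbb{F}}$ is again $L(\frac12,0)$. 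Step 2 is correct: $\iv_3\iv=\frac14\vacuum$, $\iv_2\iv=0$ and $\iv_1\iv=2\iv$ all check out. The descent through $\Z[\frac12]$ makes explicit what the paper leaves implicit in positive characteristic.

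The weak point is Step 3, whose main argument does not work as written. The claim that simplicity persists over $\mathbb{F}$ because the fermionic construction is defined over $\Z[\frac12]$ is not a valid inference. The $\Z[\frac12]$-span of the monomials $\iv_{n_1}\cdots\iv_{n_k}\vacuum$ need not be saturated in $V_{\Z\alpha,\Z[1/2]}$. Also, the invariant form restricted to it can become degenerate modulo $p$, so the reduction can acquire a nonzero maximal ideal. Your argument never uses $p\neq 7$, which shows it cannot be correct as stated. Similarly, nondegeneracy of the ambient form on $V_{\Z\alpha,\mathbb{F}}$ only reformulates the question: a nondegenerate form can restrict degenerately to a subspace.

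What actually closes the proof is your fallback, and it needs no new computation. The weight-$6$ singular vector of $V(\frac12,0)$ can be normalized to have coefficients in $\Z[\frac12]$. Its image in $V_{\Z\alpha,\mathbb{Q}}$ vanishes by \cite{DLMN}. Since $V_{\Z\alpha,\Z[1/2]}$ is torsion-free, it vanishes there too, and hence after base change to $\mathbb{F}$. This needs the fact you attribute to Dong--Ren: for $\operatorname{char}\mathbb{F}\neq 2,7$, this vector generates the maximal ideal of $V(\frac12,0)$. Granting that, $\mathrm{Vir}(\iv)$ is a nonzero quotient of $L(\frac12,0)$, hence isomorphic to it, and this is exactly where $p\neq 7$ enters. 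You should make this the main argument and drop the free-fermion paragraph.
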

\begin{proof}
	For $\lambda=1$ the fact that $\iv^\pm(1,\alpha)$ are Ising vectors follows from \cite{DLMN}.
	The general case follows from similar computations, or from applying an automorphism of the vertex algebra (after extending the base field). 
\end{proof}
We can investigate when the Ising vectors $e^{\pm}(\lambda,\alpha)$ are of $\sigma$-type. This proposition is probably well-known to experts (see e.g. \cite{DLMN,JLY1}), but we have not found an explicit proof in the literature.
\begin{proposition}
	The Ising vectors $\iv(\lambda,\alpha)$ above are of $\sigma$-type whenever $(\alpha,L) \subseteq 2\Z$.
\end{proposition}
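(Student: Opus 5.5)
Since $\sigma$-type means $L(\frac12,\frac1{16})$ does not occur in $V$, I will show that $\iv_1$ and the other modes of $\iv$ act with no eigenvalue in $\frac1{16}+\Z$ on any lowest weight vector. The plan uses the $\tau$-involution in the spirit of Miyamoto. By \cref{prop:fuse}, $V=U_0\oplus U_{\frac12}\oplus U_{\frac1{16}}$, and the fusion law is $\Z/2$-graded with $U_{\frac1{16}}$ in the odd part. So it suffices to find an automorphism-type obstruction, or more directly to decompose $V$ under a subalgebra that contains $\mathrm{Vir}(\iv)$ and whose modules are explicitly known.

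First I will reduce to a rank-one sublattice. Let $K=\Z\alpha$, so that $V_K\subseteq V_{L,\mathbb F}$, and decompose
\[V_{L,\mathbb F}=\bigoplus_{\beta+K\in L/K} V_{\beta+K}\otimes(\text{Heisenberg part orthogonal to }\alpha),\]
with each summand a $V_K$-module. The vector $\iv(\lambda,\alpha)$ lies in $V_K$, so only the $\alpha$-direction Heisenberg modes and the $e^{\pm\alpha}$ operators act on each summand. The orthogonal factor therefore only contributes multiplicity, and the $\mathrm{Vir}(\iv)$-module structure of $V_{L,\mathbb F}$ is governed by the $V_K$-modules $V_{K+\beta}$. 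Here the relevant data is $(\alpha,\beta)\bmod 4$, equivalently $\beta$ up to $K$ and its projection onto $\mathbb Q\alpha$. The hypothesis $(\alpha,L)\subseteq 2\Z$ means exactly that only the cosets with $(\alpha,\beta)\in\{0,2\}\bmod 4$ occur.

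Next I will use the standard identification $V_{\Z\alpha}$ with $(\alpha,\alpha)=4$, namely $V_{\sqrt2A_1}\cong L(\frac12,0)\otimes L(\frac12,0)\oplus L(\frac12,\frac12)\otimes L(\frac12,\frac12)$, in which $\iv(\lambda,\alpha)$ with either sign is one of the two commuting Ising vectors (this is the \cite{DLMN} picture). Under the same decomposition, the coset modules $V_{K+\beta}$ with $(\alpha,\beta)\equiv 2\bmod 4$, i.e. $V_{K+\alpha/2}$, decompose as $L(\frac12,0)\otimes L(\frac12,\frac12)\oplus L(\frac12,\frac12)\otimes L(\frac12,0)$. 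Only the cosets with $(\alpha,\beta)$ odd produce $L(\frac12,\frac1{16})$ factors. To make this rigorous in our positive-characteristic setting, I would use a cheaper criterion: the lowest weights are read off from $\iv_1$ on lowest weight vectors. For $v\in V_{K+\beta}$ of the form $e^{\beta}$ (times orthogonal Heisenberg vectors), $\frac1{16}\alpha(-1)^2$ acts through $\iv_1$ with $\frac1{32}(\alpha,\beta)^2$. The $e^{\pm\alpha}$ terms shift $\beta$ by $\pm\alpha$ and contribute only off-diagonally to the lowest weight computation. The $\L0^{\iv}$-eigenvalues on each coset therefore lie in $\frac{(\alpha,\beta)^2}{32}+\frac12\Z$-type sets. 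When $(\alpha,\beta)\in 2\Z$ these lie in $\frac18\Z$, and in fact in $\frac12\Z$ once the $\pm$ mixing is accounted for. In either case they avoid $\frac1{16}+\frac12\Z$, whereas $L(\frac12,\frac1{16})$ has all $\iv_1$-eigenvalues in $\frac1{16}+\Z$.

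Finally I will combine these. By \cref{prop:fuse}(i), any $L(\frac12,\frac1{16})$ summand would contain an $\iv_1$-eigenvector with eigenvalue $\frac1{16}$. Such an eigenvector must lie in some coset, and the eigenvalue computation above rules this out, so $\iv$ is of $\sigma$-type. The main obstacle is making the $\iv_1$-spectrum computation on $V_{K+\beta}$ clean rather than heuristic, and compatible with positive characteristic (excluding $2$ and $7$, where $\frac1{16}$ and $\frac12$ stay distinct modulo the relevant lattice). My first attempt would be the cheapest invariant, a $\Z/2$-grading: the automorphism $\theta$ acting on $e^{\beta}$ by $(-1)^{(\alpha,\beta)/2}$ is well defined because of the hypothesis, fixes $\iv$, and should realise $\tau_\iv$ on each coset. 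Then I would invoke that $\tau_\iv$ can only be defined (equivalently, the fusion law is compatible with a sign on $U_{\frac1{16}}$) when the $U_{\frac1{16}}$ component vanishes on cosets where $\theta$ acts as a scalar. This reduces the question to checking two explicit small modules by hand.
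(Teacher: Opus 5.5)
Your proposal has a genuine gap. None of the mechanisms you offer actually excludes $L(\frac12,\frac1{16})$ for $V_{L,\mathbb{F}}$ over a field of positive characteristic, and that case is part of the statement.

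In characteristic $0$ your third paragraph works as a proof by citation. Only the $V_{\Z\alpha}$-modules $V_{\Z\alpha}$ and $V_{\Z\alpha+\alpha/2}$ occur, and their decompositions under the two commuting Ising vectors are known. As you note, this is not available over $\mathbb{F}$, and both of your substitutes fail.

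\textbf{The spectral heuristic is incorrect.}
\begin{itemize}
\item The diagonal value is $(\frac1{16}\alpha(-1)^2)_1e^\beta=\frac1{16}(\alpha,\beta)^2e^\beta$, not $\frac1{32}(\alpha,\beta)^2e^\beta$.
\item The off-diagonal $e^{\pm\alpha}$ terms do change the spectrum. For $(\alpha,\beta)=2$, both diagonal entries of $\iv_1$ on $\mathrm{span}\{e^\beta,e^{\beta-\alpha}\}$ equal $\frac14$. Meanwhile $(e^{-\alpha})_1e^\beta$ and $(e^{\alpha})_1e^{\beta-\alpha}$ are nonzero multiples of $e^{\beta-\alpha}$ and $e^\beta$. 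The eigenvalues are $0$ and $\frac12$, which are not in $\frac14+\frac12\Z$. So your claim that the eigenvalues lie ``in $\frac12\Z$ once the mixing is accounted for'' is exactly what has to be proved.
\item In characteristic $p$ the set $\frac1{16}+\frac12\Z$ is all of $\mathbb{F}_p$. No criterion based on the $\iv_1$-spectrum modulo $\Z$ can then separate $U_{\frac1{16}}$ from $U_0\oplus U_{\frac12}$. You must work with genuine lowest weight vectors, using $\iv_n$ for $n\geq 2$ as well.
\end{itemize}

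\textbf{The $\theta$ argument fails too.}
\begin{itemize}
\item $\theta$ is not $\tau_\iv$. It acts by $-1$ on all of $\mathrm{span}\{e^\beta,e^{\beta-\alpha}\}$ when $(\alpha,\beta)=2$. That span contains lowest weight vectors of weights $0$ and $\frac12$, where $\tau_\iv$ acts by $+1$.
\item $\tau_\iv$ is always defined by the fusion rules, so its existence gives no constraint.
\item Any automorphism fixing $\iv$ commutes with all $\iv_n$ and hence preserves every $U_h$. It therefore cannot force $U_{\frac1{16}}=0$.
\end{itemize}

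\textbf{The missing idea} is the one you state in your first paragraph but never use. By \cref{prop:fuse}, none of $0\star0$, $0\star\frac12$, $\frac12\star\frac12$ contains $\frac1{16}$. Hence $U_0\oplus U_{\frac12}$ is closed under all modes and is a vertex subalgebra of $V_{L,\mathbb{F}}$. It then suffices to show that a generating set lies in $U_0\oplus U_{\frac12}$.

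The paper does this as follows:
\begin{itemize}
\item It takes McRae's generators $e^{\pm\beta}$, $\beta\in\Delta$, where $\Delta$ is a spanning set of $L$ containing $\alpha$. Using $(\alpha,L)\subseteq 2\Z$, it arranges $\langle\alpha,\beta\rangle\in\{0,2\}$ for $\beta\neq\alpha$.
\item It writes each generator as a sum of explicit lowest weight vectors of weight $0$ or $\frac12$, or their descendants.
\item If $\langle\alpha,\beta\rangle=0$, $e^\beta$ itself is such a vector.
\item If $\langle\alpha,\beta\rangle=2$, one uses two-term combinations of $e^\beta$ with its neighbour in $\beta+\Z\alpha$, exactly the $2\times 2$ computation above.
\item For $e^{\pm\alpha}$, one uses explicit vectors built from $\alpha(-1)^2$, $\alpha(-1)$, $\alpha(-2)$ and $e^{\pm\alpha}$.
\end{itemize}
This finite check is characteristic-free. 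It needs neither a coset decomposition nor any information about the $\iv_1$-spectrum on all of $V_{L,\mathbb{F}}$.
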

\begin{proof}
	We set without loss of generality $\lambda=1$. By \cref{prop:fuse}, it suffices to prove that a generating set of $V_L$ is contained in the direct sum of the $L(\frac{1}{2},\frac{1}{2})$- and the $L(\frac{1}{2},0)$-isotypic components, denoted $U_\frac{1}{2},U_0$ respectively. By \cite[Theorem 1]{M14}, the set $\{e^{\pm \beta} \mid  \beta \in \Delta \}$ where $\Delta$  is a spanning set of the lattice $L$, is a generating set of $V_{L,\mathbb{F}}$. Without loss of generality, we can choose this spanning set such that $\alpha\in \Delta$, and $\langle \alpha , \beta \rangle \in \{0,2\}$ for $\beta\in \Delta\setminus\{\alpha\}$.
	
	First note that for $\beta \in \Delta$ and $n>0$, we have $(\alpha(-1)^2)_ne^{\beta} =\delta_{n,1}\langle \alpha,\beta \rangle^2e^\beta $. Take $\beta \in \Delta$, we consider three cases.
	\par{Case $\langle \alpha,\beta \rangle = 0$.} We can easily compute from the formula of $Y(e^\alpha,z)$ that also $(e^\alpha)_ne^\beta=(e^{-\alpha})_ne^\beta=0$ for $n \geq 0$.
	 \par{Case $\langle \alpha,\beta \rangle=2$.} Here we have to check this for $e^{\pm\beta}$, but without loss of generality it suffices to compute it for $e^{+\beta}$,  
	 then $(e^\alpha)_ne^\beta=(e^{-\alpha})_{n-1}e^\beta =0$ for $n>0$, while $(e^\alpha)_1e^{\beta} = \epsilon(\alpha,\beta)e^{\alpha+\beta}$. Hence, $e^\beta+\epsilon(\alpha,\beta)e^{\alpha+\beta}$ respectively $e^\beta-\epsilon(\alpha,\beta)e^{\alpha+\beta}$ is a lowest weight vector of weight $\frac{1}{2}$ respectively $0$.

	\par{Case $\beta = \alpha$.} Similarly, $\alpha(-1)^2 - 4(e^\alpha + e^{-\alpha})$ is a lowest weight vector of weight $\frac{1}{2}$, while $\alpha(-1)^2 +4( e^{\alpha} +e^{-\alpha})\in \Vir(e)$, Hence $e^\alpha + e^{-\alpha}$ is contained in $U_\frac{1}{2} \oplus U_0$. On the other hand, $\alpha(-1)$ is a lowest weight vector of weight $\frac{1}{2}$, and $(\iv(1,\alpha))_0\alpha(-1) = \frac{1}{2}\alpha(-2) -(e^\alpha - e^{-\alpha})$, while $\frac{1}{2}\alpha(-2) +(e^\alpha - e^{-\alpha})$ is also a lowest weight vector of weight $\frac{1}{2}$. 
	%
\end{proof}
\begin{remark}
	The same strategy shows that $e^\beta$ with $\langle\alpha , \beta \rangle= \pm 1$ is a lowest weight vector of weight $\frac{1}{16}$. Hence, a lattice VOA contains only Ising vectors of $\sigma$-type if and only if $(L_4,L) \subseteq 2\mathbb{Z}$, where $L_4$ is the set of length $4$ vectors contained in $L$.
\end{remark}

In characteristic zero, the vertex operator algebra $L_{\hat{\mathfrak{g}}}(1,0)$ is isomorphic to a lattice VOA whenever $\mathfrak{g}$ is split simple of ADE type \cite[Proposition~13.10]{DL}. Specifically, $L_{\hat{\mathfrak{g}}}(1,0)\cong V_R$ where $R$ is the corresponding root lattice. Using the previous lemma and proposition, one can easily show that Chayet--Garibaldi algebras contain Ising vectors, both of $\sigma$-type and not of $\sigma$-type. 

Indeed, let $R$ be a root lattice of type $D_5$, spanned by $\alpha_1,\dots,\alpha_5$, all of length 2. The Dynkin diagram of $D_5$ (\cref{fig:D5}) encodes the inner product of the lattice: two nodes are adjacent if and only if the inner product of the corresponding lattice vectors $\alpha$, $\beta$ satisfy $\langle \alpha, \beta \rangle=1$. Lattice vectors are orthogonal whenever the corresponding nodes are not adjacent.
\begin{figure}
	\centering
	\begin{tikzpicture}[line width=1pt, scale=.8, baseline=-.6ex]
		\draw (0,0) -- (1,0);
		\draw (1,0) -- (2,0);
		\draw (2,0) -- (3,.4);
		\draw (2,0) -- (3,-.4);
		\diagnode{(0,0)}{$\alpha_1$} 
		\diagnode{(1,0)}{$\alpha_2$} 
		\diagnode{(2,0)}{$\alpha_3$} 
		\diagnode{(3,.4)}{$\alpha_{4}$} 
		\diagnode{(3,-.4)}{$\alpha_5$} 
	\end{tikzpicture}
\caption{The Dynkin diagram of type $D_5$}\label{fig:D5}
\end{figure}
From this, it is easily seen that $e(1, \alpha_4+\alpha_5)$ is an Ising vector of $\sigma$-type, while $e(1,\alpha_2+ \alpha_5)$ is not of $\sigma$-type. 
 The Magma code accompanying \cite{CG21}, available at \url{github.com/skipgaribaldi/Chayet--Garibaldi}, can then be used to verify that the fusion laws from \cref{thm:fusion_law,prop:fuslawBn} are in fact minimal.
 
\begin{remark}
	\begin{enumerate}
		\item The above argument works for type
		$D_n$ in general. Moreover, in type $B_n$, analogues of the Ising vectors from \cref{lem:isinglattice} can be constructed that are also of $\sigma$-type. So the Chayet--Garibaldi algebras for types $B_n,D_n$ contain Ising vectors of $\sigma$-type. In general, \cref{lem:isinglattice} provides Ising vectors which are not of $\sigma$-type.
		\item The Ising vectors $e(1,\alpha)$ are contained in the Chayet--Garibaldi algebra $A(\mathfrak{g}_\mathbb{R}) \subseteq  A(\mathfrak{g}_\mathbb{C}) $ corresponding to the compact real form $\mathfrak{g}_\mathbb{R}$ of a complex simple Lie algebra $\mathfrak{g}_\mathbb{C}$. It would be interesting to know whether this holds more generally for arbitrary twisted forms.
	\end{enumerate}
	
\end{remark}

\end{document}